\newtheorem{theorem}{Theorem}
\newtheorem{corollary}[theorem]{Corollary}
\newtheorem{lemma}[theorem]{Lemma}
\newtheorem{example}[theorem]{\it Example}
\newtheorem{proposition}[theorem]{Proposition}
\newtheorem{remark}[theorem]{\it Remark}
\def\NN{\mathbb N}
\def\N{\mathbb N}
\def\R{\mathbb R}
\def\RR{\mathbb R}
\begin{document}

\title[Quadrature rules for $L^1$-weighted norms of orthogonal polynomials]
{Quadrature rules for $L^1$-weighted norms of orthogonal polynomials}

\author{Luciano Abadias}
\address{Departamento de Matem\'aticas, Instituto Universitario de Matem\'aticas y Aplicaciones, Universidad de Zaragoza, 50009 Zaragoza, Spain.}
\email{labadias@unizar.es}
%    \thanks will become a 1st page footnote.

 %   Information for first author
\author{Pedro J. Miana}
\address{Departamento de Matem\'aticas, Instituto Universitario de Matem\'aticas y Aplicaciones, Universidad de Zaragoza, 50009 Zaragoza, Spain.}
\email{pjmiana@unizar.es}
 %\thanks{The second author is  partially supported by ????}

\author{Natalia Romero}
\address{Departamento de Matem\'aticas y Computaci\'{o}n, Universidad de La Rioja, 26006 Logro\~{n}o, Spain}
\email{natalia.romero@unirioja.es}
 %\thanks{The second author is  partially supported by ????}

\thanks{\noindent Two first authors have been partially supported by Project MTM2010-16679, DGI-FEDER, of the MCYTS and Project E-64, D.G. Arag\'on, Spain. The third author is supported by Ministerio de Ciencia y Tecnolog\'{\i}a MTM2011-28636-C02.}

% General info
\subjclass[2010]{33C45, 42C05, 65D32}

%\dedicatory{To Laura and Pablo}

\keywords{Orthogonal polynomials, Gauss-Jacobi quadrature rules, weighted Lebesgue spaces}

\begin{abstract}In this paper we obtain $L^1$-weighted norms of classical orthogonal polynomials (Hermite, Laguerre and Jacobi polynomials) in terms of the zeros of these orthogonal polynomials; these expressions are usually known as quadrature rules. In particular these new formulae   are useful to calculate directly some positive defined integrals as several examples show.

\end{abstract}

\date{}

\maketitle

\section{Introduction}

A unified approach to classical  orthogonal polynomials (Laguerre, Hermite and Jacobi polynomials)  is via Rodrigues' formula, i.e., \begin{equation}\label{rodrigues}Q_{n,\omega}(t)=\frac{1}{\omega(t)\mu_n}\frac{d^n}{dt^n}(\omega Q^n)(t),\qquad t \in (a,b),\end{equation} where $\omega$ is a weight in the range of definition $(a,b),$  $Q$ is a polynomial and $\mu_n$ is a constant depending of $n\ge 0$. The following table shows how to obtain  Laguerre, Hermite and Jacobi polynomials ($L_n^{(\alpha)}$, $H_n$, and $P_n^{(\alpha, \beta)}$ respectively) taking different values of $\omega$, $Q$ and $\mu_n$:\\

\medskip
\centerline{
\begin{tabular}{|c|c|c|c|c|}
\hline
Orthogonal polynomial, $Q_{n,\omega}$ & $\mu_n$ & $\omega$ & $Q$ & $(a,b)$ \\
\hline
Laguerre polynomial, $L_n^{(\alpha)}$ & $n!$ & $t^{\alpha}e^{-t}$ & $t$ & $(0,\infty)$ \\
\hline
Hermite polynomial, $H_n$ & $(-1)^n$ & $e^{-t^2}$ & $1$ & $(-\infty,\infty)$ \\
\hline
Jacobi polynomial, $P_n^{(\alpha,\beta)}$ & $(-1)^n 2^n n!$ & $(1-t)^{\alpha}(1+t)^{\beta}$ & $(1-t^2)$ & $(-1,1)$ \\
\hline
\end{tabular}
}
\medskip
 The zeros of the orthogonal polynomials $Q_{n,\omega}$ associated to the distribution $\omega(t)dt$ on the interval $[a,b]$ are real and distinct and are located in the interior of the interval $[a, b]$, see \cite[Theorem 3.3.1]{Szego}.  Note that $\mathcal{Z}(Q_{n,\omega})=\mathcal{Z}(wQ_{n,\omega})$ on the interval $(a, b)$ where  $\mathcal{Z}(f)$  is the set the zeros of the function $f$.

The well-known  Gauss-Jacobi quadrature rule  states that
$$
\int_a^bp(t)\omega(t)dt=\sum_{j=1}^n \lambda_jp(t_j), \qquad t_j\in \mathcal{Z}(Q_{n,\omega}),
$$
where $p$ is an arbitrary polynomial of degree $2n-1$ and parameters $(\lambda_j)_{1\le j\le n}$ are known as Christoffel numbers. The distribution $\omega(t)dt$ and the integer $n$ uniquely determine these numbers $(\lambda_j)_j$, see for example \cite[Theorem 3.4.1; Chapter XV]{Szego}. It is difficult to state the origin of this theorem but Jacobi must have been aware of it in 1826 (\cite{Jacobi}).

There exists a great number of papers and monographies about location of zeros of orthogonal polynomials and different types of quadrature rules: details of Gauss-Jacobi quadrature rule may be found, for example, in  \cite{GM}, \cite{SZ} and \cite[Chapter XV]{Szego}. Szeg\"{o} polynomials and Szeg\"{o} quadrature formula on the unit circle are studied for the Fej\'{e}r kernel in \cite{Santos}. Connections with
orthogonal polynomials on the line and Pad\'{e} approximants are also obtained in \cite{Santos}. In \cite{Hunter}, a number of  formulae  are derived for the numerical evaluation of singular integrals in the interval $(-1, 1)$. These formulae are based on Gauss-Legendre quadrature rule.
 Later in \cite{BVM}, authors propose to approximate the Hilbert transform of smooth functions   by using the zeros of Hermite polynomials. In the nice paper \cite{Gaut}, various concepts of orthogonality on the real line  are reviewed in connection with quadrature rules. Finally, Gaussian and other positive quadrature rules are investigated to deduce some conditions about the existence of prescribed abscissa in \cite{BBMQ}.

In this paper, we prove a formula similar to   Gauss-Jacobi quadrature rule (also named as Gaussian quadrature rule) to obtain $L^1$-weighted  norm of classical orthogonal polynomials. This kind of results  seems that  has not been considered before in the literature. In \cite{CS, DS}, the error of the Gaussian quadrature rule is estimated in an $L^1$-weighted norm, only in the Jacobi setting.

A first approach to our problem is the following theorem. Again, the set of zeros of orthogonal polynomials plays an important role.

\begin{theorem}\label{join} For $n\geq 1,$ functions $Q_{n,\omega}$ verify   $$\Vert  Q_{n,\omega}\,\omega\Vert_1:= \int_{a}^b \vert Q_{n,\omega}(t)\vert \omega(t)dt=2\frac{\mu_{n-1}}{|\mu_n|}\displaystyle\sum_{j=1}^{n}(-1)^{j+1}\omega(t_j)Q(t_j)Q_{n-1,\omega Q}(t_j),$$ where $t_j\in\mathcal{Z}(Q_{n,\omega})$ and $a<t_1<\ldots<t_n<b.$
\end{theorem}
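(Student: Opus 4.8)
The plan is to combine the Rodrigues representation \eqref{rodrigues} with the sign pattern of $Q_{n,\omega}$ forced by its simple interior zeros. Set $\Phi_n:=\omega Q^n$, so that \eqref{rodrigues} reads $\mu_n\,\omega\,Q_{n,\omega}=\Phi_n^{(n)}=\bigl(\Phi_n^{(n-1)}\bigr)'$; thus $\tfrac{1}{\mu_n}\Phi_n^{(n-1)}$ is an explicit antiderivative of $\omega\,Q_{n,\omega}$. Since the zeros $a<t_1<\cdots<t_n<b$ are simple, $Q_{n,\omega}$ has constant sign $\varepsilon_j\in\{\pm1\}$ on each subinterval $(t_j,t_{j+1})$, $j=0,\dots,n$ (with $t_0:=a$, $t_{n+1}:=b$), and consecutive signs satisfy $\varepsilon_{j-1}=-\varepsilon_j$. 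I would then write
$$\Vert Q_{n,\omega}\,\omega\Vert_1=\sum_{j=0}^{n}\varepsilon_j\int_{t_j}^{t_{j+1}}Q_{n,\omega}\,\omega\,dt=\frac{1}{\mu_n}\sum_{j=0}^{n}\varepsilon_j\Bigl(\Phi_n^{(n-1)}(t_{j+1})-\Phi_n^{(n-1)}(t_j)\Bigr).$$

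Next I would carry out the signed telescoping. Regrouping by the interior nodes, the coefficient of $\Phi_n^{(n-1)}(t_k)$ equals $\tfrac{1}{\mu_n}(\varepsilon_{k-1}-\varepsilon_k)=-\tfrac{2}{\mu_n}\varepsilon_k$ for $1\le k\le n$, while the two genuine endpoint contributions carry $\Phi_n^{(n-1)}(a)$ and $\Phi_n^{(n-1)}(b)$. These boundary terms vanish: in each family $\Phi_n=\omega Q^n$ retains enough vanishing at $a$ and $b$ to survive $n-1$ differentiations (the factor $e^{-t^2}$ at $\pm\infty$ for Hermite; the factor $t^{\alpha+1}$ at $0$ for Laguerre; the factors $(1-t)^{\alpha+1}$, $(1+t)^{\beta+1}$ at $\pm1$ for Jacobi, using $\alpha,\beta>-1$). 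This is exactly the vanishing that underlies orthogonality, so $\Phi_n^{(n-1)}(a)=\Phi_n^{(n-1)}(b)=0$ and only the interior sum survives.

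The final input identifies $\Phi_n^{(n-1)}$ with the lower-order polynomial. The key observation is that $\omega Q$ is again a weight of the same type (Jacobi $(\alpha+1,\beta+1)$, Laguerre $\alpha+1$, or Hermite) with the same $Q$, and that the Rodrigues constant depends only on the index, not on the parameters $\alpha,\beta$. Hence \eqref{rodrigues} applied to $\omega Q$ at order $n-1$ gives $\mu_{n-1}\,\omega Q\,Q_{n-1,\omega Q}=\tfrac{d^{n-1}}{dt^{n-1}}(\omega Q\cdot Q^{n-1})=\Phi_n^{(n-1)}$, so that $\Phi_n^{(n-1)}(t_k)=\mu_{n-1}\,\omega(t_k)Q(t_k)Q_{n-1,\omega Q}(t_k)$. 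Substituting this and writing $\varepsilon_k=(-1)^{n-k}\,\mathrm{sgn}(k_n)$, where $k_n$ is the leading coefficient of $Q_{n,\omega}$ (its sign being that of $Q_{n,\omega}$ on the rightmost interval $(t_n,b)$), reduces the norm to $\tfrac{2\,\mathrm{sgn}(k_n)(-1)^n}{\mu_n}\,\mu_{n-1}\sum_{k=1}^{n}(-1)^{k+1}\omega(t_k)Q(t_k)Q_{n-1,\omega Q}(t_k)$.

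The step I expect to require the most care is the sign bookkeeping that turns the prefactor $\mathrm{sgn}(k_n)(-1)^n/\mu_n$ into $1/|\mu_n|$ and fixes the alternation as $(-1)^{k+1}$. This rests on the identity $\mathrm{sgn}(k_n)=(-1)^n\,\mathrm{sgn}(\mu_n)$, which I would check uniformly against the table: $k_n=2^n>0$ with $\mu_n=(-1)^n$ (Hermite); $k_n=(-1)^n/n!$ with $\mu_n=n!$ (Laguerre); and $k_n>0$ with $\mu_n=(-1)^n2^nn!$ (Jacobi). Granting it, $\mathrm{sgn}(k_n)(-1)^n/\mu_n=\mathrm{sgn}(\mu_n)/\mu_n=1/|\mu_n|$, the prefactor collapses to $2\mu_{n-1}/|\mu_n|$, and the stated formula follows. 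The only other point to keep honest is the correct alternation of $\varepsilon_j$ and the orientation of the outermost interval, both of which are pinned down once $\mathrm{sgn}(k_n)$ is known.
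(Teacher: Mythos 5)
Your proof is correct and takes essentially the same route as the paper's: both split the integral at the zeros, integrate via the explicit antiderivative $\tfrac{1}{\mu_n}\tfrac{d^{n-1}}{dt^{n-1}}(\omega Q^n)$, discard the vanishing endpoint terms, and identify $\tfrac{d^{n-1}}{dt^{n-1}}(\omega Q^n)=\mu_{n-1}\,\omega\, Q\, Q_{n-1,\omega Q}$ by applying Rodrigues' formula at order $n-1$ to the weight $\omega Q$. The only differences are cosmetic: you anchor the sign alternation at the leading coefficient and check $\mathrm{sgn}(k_n)=(-1)^n\,\mathrm{sgn}(\mu_n)$ family by family, whereas the paper asserts the equivalent left-anchored pattern $\mathrm{sgn}(Q_{n,\omega})=(-1)^{j}\mathrm{sgn}(\mu_n)$ on $(t_j,t_{j+1})$ without verification (so your sign bookkeeping is, if anything, more careful), and your symbol $k_n$ for the leading coefficient clashes with the paper's $k_n$ from the orthogonality normalization, so it should be renamed if incorporated.
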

\begin{proof} We call $t_0=a$ and $t_{n+1}=b.$ From Rodrigues' formula,  we obtain that
\begin{eqnarray*}
\int_{a}^b \vert Q_{n,\omega}(t)\vert \omega(t) dt&=&\displaystyle\sum_{j=0}^{n}\int_{t_j}^{t_{j+1}}|\frac{1}{\mu_n}\frac{d^n}{dt^n}(\omega Q^n)(t)|\,dt=\displaystyle\sum_{j=0}^{n}{(-1)^{j}\over |\mu_n|}\int_{t_j}^{t_{j+1}}\frac{d^n}{dt^n}( \omega Q^n)(t)\,dt \\
&=&\displaystyle\sum_{j=0}^{n}{(-1)^{j}\over {|\mu_n|}}\frac{d^{n-1}}{dt^{n-1}}( \omega Q^n)(t)\Big|_{t_j}^{t_{j+1}}\,=
2\sum_{j=1}^{n}{(-1)^{j+1}\over |\mu_n|}\frac{d^{n-1}}{dt^{n-1}}(\omega Q^n)(t_j)\,
\end{eqnarray*}
 where we have used that $\displaystyle{|{1\over \mu_n}\frac{d^n}{dt^n}(\omega Q^n)|(t)=(-1)^{j}{1\over \vert\mu_n\vert}\frac{d^n}{dt^n}(\omega Q^n)(t)}$ for $t_j<t<t_{j+1},$ and the function $\omega Q^n$ and its derivatives of order less than $n$ vanish at the endpoints $a$ and $b$ in the three cases (Laguerre, Hermite, and Jacobi polynomials considered in the Introduction). Now we apply the formula (\ref{rodrigues}) to get that
 $$
\int_{a}^b \vert Q_{n,\omega}(t)\vert \omega(t) dt=2\frac{\mu_{n-1}}{|\mu_n|}\displaystyle\sum_{j=1}^{n}(-1)^{j+1}\omega(t_j)Q(t_j)Q_{n-1,\omega Q}(t_j),
$$
and we conclude the result.
\end{proof}
In fact, this result may be improved using some recurrence relations; we  show that
$$
\int_{a}^b \vert Q_{n,\omega}(t)\vert \omega(t) dt=2c_{n, \omega}\displaystyle\sum_{j=1}^{n}(-1)^{j+1}\omega(t_j)Q_{n-1,\omega }(t_j),
$$
 in Corollaries \ref{aaa}, \ref{ruba} and \ref{calva} (where $c_{n, \omega}$ is a parameter which depends on $\omega$ and $n$).

In this paper we are interested to estimate and calculate the following $L^1$-weighted norms
\begin{equation}\label{general} \Vert {t^i\over i!}\ Q_{n,\omega} \omega\Vert_1=
\int_a^{b}{\vert t\vert^i\over i!}\vert Q_{n,\omega}(t)\vert \omega(t)dt, \qquad n, i \in \N\cup\{0\},
\end{equation}
 in the  setting of classical orthogonal polynomials $Q_{n,\omega}$. These $L^1$-weighted norms are commonly used in applied and mathematical analysis and related to Sobolev norms (see Remark \ref{sobo}). Although a unified presentation might be considered (see Theorem \ref{join} and compare Lemmata \ref{laguerre}, \ref{keyhermite} and \ref{jacobi}), we dedicate  different sections to results concerning about each  family. The  aim of this point of view is twofold: firstly, the situation of the number $0$ respect to the set $\mathcal{Z}(Q_{n,\omega})$ is  essential and different in each case; and secondly, it allows  to handle easily constants and parameters  involved in every case.

The main line of reasoning is to study a family of    functions defined by
$
q_{n,\omega}:=\displaystyle{1\over k_n}\omega Q_{n,\omega}$
where the constant $k_n$ is given by the orthogonal condition,

\begin{equation}\label{ortos}
\int_a^b   Q_{n,\omega}(t) Q_{m,\omega}(t)\omega(t)dt={k_n}\delta_{n, m},\qquad n,m\in \NN\cup\{0\},
\end{equation}
and $\delta_{n,m}$ is the Kronecker distribution. The exact value of $k_n$ in each case is presented in the next table:
$$
\centerline{
\begin{tabular}{|c|c|}
\hline
Orthogonal polynomial, $Q_{n,\omega}$ & $k_n$ \\
\hline
Laguerre polynomial, $L_n^{(\alpha)}$ & ${\frac{\Gamma(n+\alpha+1)}{n!}}$  \\
\hline
Hermite polynomial, $H_n$ & $2^n n!\sqrt{\pi}$  \\
\hline
Jacobi polynomial, $P_n^{(\alpha,\beta)}$ & $\frac{2^{\alpha+\beta+1}\Gamma(n+\alpha+1)\Gamma(n+\beta+1)}{(2n+\alpha+\beta+1)\Gamma(n+\alpha+\beta+1)n!} $  \\
\hline
\end{tabular}
}
$$

\medskip

 These functions $q_{n,\omega}$ are fundamental in classical orthogonal expansions (\cite[Chapter 4]{Lebedev} and \cite[Chapter IX]{Szego}). Recently the authors have treated them to introduce Laguerre expansions for $C_0$-semigroups in \cite{AM1} and Hermite expansions for $C_0$-groups and cosine function  in \cite{AM2}. In fact,  to get  sharp estimations of $\Vert q_{n,\omega}\Vert_1$ is the motivating starting-point of this paper:  sharp estimations allow to  assure convergence of  vector-valued  orthogonal expansions, see more details in \cite{AM1, AM2}.

The paper is organized as follows. The second section deal with Laguerre polynomials, the third section with Hermite polynomials and the last one with Jacobi polynomials.   Three recurrence relations, differential equations and other known relations for orthogonal polynomials are verified by  functions $
q_{n,\omega}$. We apply the Cauchy-Schwarz inequality to estimate (\ref{general}) in Propositions \ref{si}, \ref{bound} and \ref{yes}. Then we integrate by parts to express
$\displaystyle{
\int{ t^i\over i!} q_{n,\omega}(t)dt}
$
by linear combinations of functions $q_{k,\omega_k}$ (Lemmata \ref{laguerre}, \ref{keyhermite} and \ref{jacobi}). A straightforward consequence of this identity is the equality
$$
\int_a^b{ t^i\over i!} q_{n,\omega}(t)dt=0, \qquad 0\le i \le n-1,
$$
which may be also shown from the orthogonal relation (\ref{ortos}).

As consequences of previous results, Theorems \ref{aa}, \ref{zeroshermite} and \ref{jacobino} are  main results of this paper, where the exact value of (\ref{general})  is obtained for $0\le i \le n-1$. For  $i=0$ or $i=n-1$, these theorems are improved in Corollaries \ref{aaa}, \ref{ruba} and \ref{calva}.
  These formulae provide a fast and efficient way to calculate some defined integrals, as Examples \ref{ejlaguerre}, \ref{ejhermite} and \ref{jaejemplo} show, and may be of interest to  general  and specific public including mathematical software companies.

%\noindent {\bf Notation}.

\section{Laguerre polynomials}

\setcounter{theorem}{0}
\setcounter{equation}{0}

Generalized Laguerre polynomials $\{ L_n^{(\alpha)} \}_{n\ge 0}$ ($\alpha >-1$) are given  by
$$
L_n^{(\alpha)}(t)=\sum_{k=0}^n(-1)^{k}{n+\alpha\choose n-k}{t^k\over k!}, \qquad t\ge 0;
$$
in particular $L_0^{(\alpha)}(t)=1$, $L_1^{(\alpha)}(t)=-t+\alpha+1$  and
$\displaystyle{L_2^{(\alpha)}(t)={t^2\over 2}-(\alpha+2)t+{(\alpha+2)(\alpha+1)\over 2}
}$. Polynomials $\{ L_n^{(\alpha)} \}_{n\ge 0}$ are solutions of second order differential equation \begin{equation}\label{Laguerresoleq}ty''+(\alpha+1-t)y'+ny=0,\end{equation}
and  satisfy the following recurrence relations
\begin{eqnarray*}
nL_n^{(\alpha)}(t)&=&(n+\alpha)L_{n-1}^{(\alpha)}(t)-tL_{n-1}^{(\alpha+1)}(t);\cr \cr
tL_n^{(\alpha+1)}(t)&=&(n+\alpha)L_{n-1}^{(\alpha)}(t)-(n-t)L_{n}^{(\alpha)}(t),
\end{eqnarray*}
see for example \cite{Lebedev} and \cite{Szego}. Note that  $L_n^{\alpha}(t)=L_n^{(\alpha+1)}(t)-L_{n-1}^{(\alpha+1)}(t)$ and we iterate to get that
\begin{equation}\label{euro}
L_n^{(\alpha)}(t)=\sum_{k=0}^n(-1)^kL^{(\alpha+1+k)}_{n-k}(t), \qquad t\ge 0.
\end{equation}

Now we consider the following Laguerre functions $\{ \ell_n^{(\alpha)} \}_{n\ge 0}$ defined by
$$
\ell_n^{(\alpha)}(t):=\frac{n!}{\Gamma(n+\alpha+1)}t^{\alpha}e^{-t}L_n^{(\alpha)}(t),\qquad t\ge 0,
$$
for $\alpha\neq -1,-2,-3,\ldots,$  and $n\in \NN\cup\{0\}$. Recently these functions have been studied in \cite{AM1}, and  the following identity \begin{equation}\label{Laguerre2}\ell_n^{(\alpha)}(t)=\ell_{n-1}^{(\alpha)}(t)-\ell_{n-1}^{(\alpha+1)}(t),\qquad t\ge 0,\end{equation} holds, see \cite[Proposition 2.3 (i)]{AM1}.
%;  reiterating this equality we obtain that %$$\ell_n^{(\alpha)}(t)=\displaystyle\sum_{l=0}^k(-1)^l\binom{k}{l}\ell_{n+l}^{(\alpha-k)}$$ For all $k\in\N$ and %$\alpha-k\neq -1,-2,\ldots$.
The family $\{ \ell_n^{(\alpha)} \}_{n\geq 0}$ is a total set in $L^p(\R^+)$ for $\alpha >-\frac{1}{p},$ with $1\leq p<\infty$ (\cite[Theorem 3.1 (ii)]{AM1}). Furthermore, the optimal estimate of $\lVert\ell_n^{(\alpha)}\rVert_1$ has a key role on the study of vector-valued Laguerre expansions.  In \cite[Remark 2.10]{AM1}, we prove that  $$\frac{M_\alpha}{n^{\frac{\alpha+1}{2}}}\leq \Vert \ell^{(\alpha)}_n\Vert_1 \le \frac{C_{\alpha}}{n^{\alpha \over 2}}, \qquad n\in \NN,$$  for $\alpha>-1$  and $C_{\alpha}, M_{\alpha}>0.$

\begin{proposition} \label{si}For $n, i\in \N$, and  $\alpha>-(i+1)$, the inequality
$$
\int_0^\infty {t^i\over i!}\vert \ell_n^{(\alpha)}(t)\vert dt\le C_\alpha{2^ii^{{\alpha\over 2}-{1\over 4}}\over n^{\alpha\over 2}},
$$ holds with $C_\alpha$  a  constant which does not depend on $n$ or $i$.

\end{proposition}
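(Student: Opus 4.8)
The plan is to reduce the weighted norm to an integral of the Laguerre polynomial against a power weight and then to apply the Cauchy--Schwarz inequality, as announced for the three families in the Introduction. First I would unfold the definition of $\ell_n^{(\alpha)}$: since the weight $t^{\alpha}e^{-t}$ is positive on $(0,\infty)$, for $\alpha>-1$ one has
\[
\int_0^\infty \frac{t^i}{i!}\,\bigl|\ell_n^{(\alpha)}(t)\bigr|\,dt
=\frac{n!}{i!\,\Gamma(n+\alpha+1)}\int_0^\infty t^{\,i+\alpha}e^{-t}\,\bigl|L_n^{(\alpha)}(t)\bigr|\,dt .
\]
The problem is thus to bound the last integral, while the gamma prefactor carries the $L^2$-normalisation $k_n=\Gamma(n+\alpha+1)/n!$ coming from the orthogonality relation (\ref{ortos}).

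The key step is a judicious Cauchy--Schwarz splitting that isolates this $L^2$-norm. I would factor the integrand as
\[
t^{\,i+\alpha}e^{-t}\bigl|L_n^{(\alpha)}\bigr|
=\bigl(t^{\,i+\alpha/2}e^{-t/2}\bigr)\cdot\bigl(t^{\alpha/2}e^{-t/2}\bigl|L_n^{(\alpha)}\bigr|\bigr),
\]
so that the square of the second factor integrates, by (\ref{ortos}), to $\int_0^\infty t^{\alpha}e^{-t}(L_n^{(\alpha)})^2\,dt=k_n=\Gamma(n+\alpha+1)/n!$, while the square of the first factor is the plain moment $\int_0^\infty t^{\,2i+\alpha}e^{-t}\,dt=\Gamma(2i+\alpha+1)$ (finite because $2i+\alpha+1>0$ under the hypothesis). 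Cauchy--Schwarz then yields
\[
\int_0^\infty \frac{t^i}{i!}\bigl|\ell_n^{(\alpha)}(t)\bigr|\,dt
\le \frac{\Gamma(2i+\alpha+1)^{1/2}}{i!}\left(\frac{n!}{\Gamma(n+\alpha+1)}\right)^{1/2}.
\]

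It then remains to read off the advertised powers from this closed form. The $n$-factor is handled by the ratio asymptotics $\Gamma(n+\alpha+1)/n!\sim n^{\alpha}$, giving $(n!/\Gamma(n+\alpha+1))^{1/2}\sim n^{-\alpha/2}$. For the $i$-factor I would apply Legendre's duplication formula to $\Gamma(2i+\alpha+1)=\Gamma\bigl(2(i+\tfrac{\alpha+1}{2})\bigr)$, which extracts the factor $2^{2i+\alpha}$ and leaves the product $\Gamma(i+\tfrac{\alpha+1}{2})\Gamma(i+\tfrac{\alpha}{2}+1)$; dividing by $i!=\Gamma(i+1)$ and using $\Gamma(i+s)/\Gamma(i+1)\sim i^{\,s-1}$ gives
\[
\frac{\Gamma(2i+\alpha+1)^{1/2}}{i!}\sim \frac{2^{\alpha/2}}{\pi^{1/4}}\,2^{\,i}\,i^{\,\alpha/2-1/4}.
\]
Since the ratio of $(n!/\Gamma(n+\alpha+1))^{1/2}$ to $n^{-\alpha/2}$ tends to a positive limit and stays positive and finite for every $n\ge1$, and likewise for the $i$-factor against $2^{\,i}i^{\,\alpha/2-1/4}$, each ratio is bounded above by a constant depending only on $\alpha$; absorbing these into a single $C_\alpha$ produces the stated inequality, uniformly in $n$ and $i$.

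The main obstacle is the parameter range below $-1$. The factorisation above uses that $t^{\alpha}$ be integrable near the origin, i.e.\ $\alpha>-1$, which is exactly what makes the $L^2$-norm $k_n$ converge; for the full stated range $-(i+1)<\alpha\le-1$ the integral $\int_0^\infty t^{\alpha}e^{-t}(L_n^{(\alpha)})^2\,dt$ diverges and the splitting must be changed. A natural remedy is to keep more of the weight $t^i$ attached to $|L_n^{(\alpha)}|^2$, the integrability threshold for the resulting two $L^2$-factors being precisely $\alpha>-(i+1)$, and to re-express the shifted integral through the connection formula obtained by iterating the relation $L_n^{(\alpha)}=L_n^{(\alpha+1)}-L_{n-1}^{(\alpha+1)}$, namely $L_n^{(\alpha)}=\sum_{j=0}^{m}(-1)^j\binom{m}{j}L_{n-j}^{(\alpha+m)}$, so that orthogonality with respect to $t^{\alpha+m}e^{-t}$ applies; alternatively one splits $(0,\infty)=(0,1)\cup(1,\infty)$ and estimates the near-origin part by hand. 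I expect the delicate point to be carrying out this endpoint adjustment without degrading the sharp power $n^{-\alpha/2}$, since every shift of the orthogonality weight worsens the growth in $n$; the case $\alpha>-1$, by contrast, is routine.
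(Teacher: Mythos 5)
You have reproduced the paper's proof essentially verbatim: the paper applies the Cauchy--Schwarz inequality with exactly your factorisation, writing $\int_0^\infty \frac{t^i}{i!}|\ell_n^{(\alpha)}(t)|\,dt \le \frac{n!}{i!\,\Gamma(n+\alpha+1)}\bigl(\int_0^\infty t^{2i+\alpha}e^{-t}dt\bigr)^{1/2}\bigl(\int_0^\infty t^{\alpha}e^{-t}|L_n^{(\alpha)}(t)|^2dt\bigr)^{1/2} = \bigl(\frac{n!\,\Gamma(2i+\alpha+1)}{(i!)^2\,\Gamma(n+\alpha+1)}\bigr)^{1/2}$, and then bounds $\frac{n!\,\Gamma(2i+\alpha+1)}{(i!)^2\,\Gamma(n+\alpha+1)} \le C_\alpha \frac{(2i)!\,(2i+1)^\alpha}{n^\alpha (i!)^2} \le C_\alpha \frac{2^{2i}\, i^{\alpha-1/2}}{n^\alpha}$ using the gamma-ratio limit and Stirling's formula; your route through the Legendre duplication formula produces the identical exponents and is merely a different bookkeeping of the same asymptotics, and your argument for uniformity of the constants in $n$ and $i$ is also the same in substance.

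The one place where you go beyond the paper is your final paragraph, and your suspicion there is correct: for $-(i+1)<\alpha\le -1$ the factor $\int_0^\infty t^{\alpha}e^{-t}|L_n^{(\alpha)}(t)|^2dt$ diverges, so this splitting cannot cover the full stated range. But the paper does not overcome this either: its proof invokes the Hilbertian basis property of the functions $t\mapsto (n!/\Gamma(n+\alpha+1))^{1/2}\,t^{\alpha/2}e^{-t/2}L_n^{(\alpha)}(t)$ in $L^2(0,\infty)$, which holds only for $\alpha>-1$, even though the proposition is stated for $\alpha>-(i+1)$. So the obstacle you flagged is a genuine gap in the paper's own argument, not a deficiency of your proof relative to it; a repair along the lines you sketch (keeping part of the weight $t^i$ attached to the polynomial factor, or raising the orthogonality parameter above $-1$ via the iterated relation $L_n^{(\alpha)}=L_n^{(\alpha+1)}-L_{n-1}^{(\alpha+1)}$) would indeed be needed to justify the statement for $-(i+1)<\alpha\le -1$.
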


\begin{proof} We apply the Cauchy-Schwarz inequality to get that
\begin{eqnarray*}
\int_0^\infty {t^i\over i!}\vert \ell_n^{(\alpha)}(t)\vert dt&=&{n!\over i!\,\Gamma(\alpha+n+1)}\int_0^\infty t^{i+\alpha}e^{-t}\vert L_n^{(\alpha)}(t)\vert dt\cr
&\le&{n!\over i!\,\Gamma(\alpha+n+1)}\left(\int_0^\infty t^{2i+\alpha}e^{-t}dt\right)^{1\over 2}\left(\int_0^\infty t^{\alpha}e^{-t}\vert L_{n}^{(\alpha)}(t)\vert^2dt\right)^{1\over 2}\cr
&=&\left({n!\,\Gamma(2i+\alpha+1)\over (i!)^2\,\Gamma(\alpha+n+1)}\right)^{1\over 2},
\end{eqnarray*}
where we have used that functions $t\mapsto \left({n!\over \Gamma(\alpha+n+1)}\right)^{1\over 2}t^{\alpha\over 2} e^{-t\over 2}L_n^{(\alpha)}(t)$ form a Hilbertian basis on $L^2(\R^+)$.

Since $\displaystyle{\lim_{n\to \infty}\displaystyle{\Gamma(n+\alpha)\over \Gamma(n)n^\alpha}=1}$, we deduce that
$$
{n!\,\Gamma(2i+\alpha+1)\over (i!)^2\,\Gamma(\alpha+n+1)}\le C_\alpha{(2i)!(2i+1)^\alpha\over n^\alpha(i!)^2}\le C_\alpha{2^{2i}i^{\alpha-{1\over 2}}\over n^\alpha},
$$
where we have applied
Stirling's formula and $C_\alpha$ is a constant depend on $\alpha$ and  independent on $i$ and $n$. We conclude the result.
\end{proof}

\begin{lemma}\label{laguerre} For $n\in \N$ and $0\le i\le n-1$, the Laguerre functions $\ell_n^{(\alpha)}$ satisfy
$$
\int  {t^i\over i!} \ell_n^{(\alpha)}(t) dt= \sum_{k=0}^i{(-1)^k\over (i-k)!}t^{i-k} \ell_{n-1-k}^{(\alpha+1+k)}(t).
$$
\end{lemma}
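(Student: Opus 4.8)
The plan is to reduce the whole statement to a single first-order relation for the Laguerre functions, namely the antiderivative identity
$$
\int \ell_n^{(\alpha)}(t)\,dt = \ell_{n-1}^{(\alpha+1)}(t), \qquad\text{equivalently}\qquad \frac{d}{dt}\,\ell_{n-1}^{(\alpha+1)}(t) = \ell_n^{(\alpha)}(t),
$$
and then to bootstrap it to arbitrary powers $t^i$ by integrating by parts. First I would establish the displayed relation. Writing out $\ell_{n-1}^{(\alpha+1)}(t) = \frac{(n-1)!}{\Gamma(n+\alpha+1)}\,t^{\alpha+1} e^{-t} L_{n-1}^{(\alpha+1)}(t)$, the cleanest route is Rodrigues' formula (\ref{rodrigues}): since $e^{-t} t^{\beta} L_m^{(\beta)}(t) = \frac{1}{m!}\frac{d^m}{dt^m}\bigl(e^{-t} t^{m+\beta}\bigr)$, differentiating this once more and matching the result with the Rodrigues expression for $\ell_n^{(\alpha)}$ yields the relation at once. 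Alternatively one may combine the classical identity $\frac{d}{dt}L_m^{(\beta)} = -L_{m-1}^{(\beta+1)}$ with the recurrence relations for the $L_n^{(\alpha)}$ recalled above; either way the computation is short.

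With this in hand I would argue by induction on $i$. The base case $i=0$ is exactly the antiderivative relation, since the right-hand side of the lemma then collapses to $\ell_{n-1}^{(\alpha+1)}(t)$. For the inductive step I integrate by parts in $\int \frac{t^i}{i!}\ell_n^{(\alpha)}(t)\,dt$ with $u = t^i/i!$ and $dv = \ell_n^{(\alpha)}(t)\,dt$, so that $v = \ell_{n-1}^{(\alpha+1)}(t)$ by the base relation. This gives
$$
\int \frac{t^i}{i!}\,\ell_n^{(\alpha)}(t)\,dt = \frac{t^i}{i!}\,\ell_{n-1}^{(\alpha+1)}(t) - \int \frac{t^{i-1}}{(i-1)!}\,\ell_{n-1}^{(\alpha+1)}(t)\,dt.
$$
The remaining integral has the same shape as the lemma but with $(n,\alpha,i)$ replaced by $(n-1,\alpha+1,i-1)$; the hypothesis $0\le i\le n-1$ guarantees $0\le i-1\le (n-1)-1$, so the induction hypothesis applies and every lower index $n-1-k\ge 0$ stays admissible.

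It then remains to match the two sides, which is pure bookkeeping. The boundary term $\frac{t^i}{i!}\ell_{n-1}^{(\alpha+1)}$ is precisely the $k=0$ summand of the claimed formula, and after shifting the summation index $k\mapsto k+1$ the subtracted sum reproduces the summands $k=1,\dots,i$, the sign $(-1)^k$ being supplied by the minus sign in front of the integral while the factorials match automatically. I expect the only genuinely substantive point to be the derivation of the antiderivative relation $\frac{d}{dt}\ell_{n-1}^{(\alpha+1)} = \ell_n^{(\alpha)}$; once that first-order fact is secured, the rest is a routine telescoping induction, and the constraint $0\le i\le n-1$ enters only to keep the indices valid. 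As a cross-check one may instead differentiate the right-hand side directly: applying $\frac{d}{dt}\ell_{n-1-k}^{(\alpha+1+k)} = \ell_{n-k}^{(\alpha+k)}$ to each summand, the product-rule contributions telescope and cancel in pairs, leaving only $\frac{t^i}{i!}\ell_n^{(\alpha)}(t)$, which confirms the identity without recourse to induction.
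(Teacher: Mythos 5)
Your proof is correct and follows essentially the same route as the paper: the paper writes $\ell_n^{(\alpha)}$ via Rodrigues' formula and integrates by parts $i$ times, identifying each intermediate derivative $\frac{d^{n-1-k}}{dt^{n-1-k}}\bigl(t^{n+\alpha}e^{-t}\bigr)$ as $\Gamma(n+\alpha+1)\,\ell_{n-1-k}^{(\alpha+1+k)}(t)$, which is exactly the one-step antiderivative relation $\int \ell_n^{(\alpha)}(t)\,dt=\ell_{n-1}^{(\alpha+1)}(t)$ that you isolate and then iterate. The only difference is organizational: you package the $i$-fold integration by parts as an induction on $i$ (with a differentiation cross-check), whereas the paper performs it in a single chain of equalities.
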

\begin{proof} We integrate by parts $i$-times to get that
\begin{eqnarray*}
\int  {t^i\over i!} \ell_n^{(\alpha)}(t) dt&=&{1\over\Gamma(\alpha+n+1)}\int {t^i\over i!}\frac{d^{n}}{dt^{n}}(t^{n+\alpha}e^{-t})(t)\,dt\cr
&=&{1\over\Gamma(\alpha+n+1)}\displaystyle\sum_{k=0}^{i}\frac{(-1)^k }{(i-k)!}t^{i-k}\frac{d^{n-k-1}}{dt^{n-k-1}}(t^{n+\alpha}e^{-t})(t)\\
&=&\displaystyle\sum_{k=0}^{i}\frac{(-1)^k }{(i-k)!}t^{i-k}\ell_{n-1-k}^{(\alpha+1+k)}(t),
\end{eqnarray*}
and we conclude the result.
\end{proof}

\begin{theorem} \label{aa} Let $n\in\N\cup\{0\},$  $0\leq i\leq n-1$ and $\alpha>-1$. Then the Laguerre functions $\ell_n^{(\alpha)}$ verify
\begin{equation}\label{lag7} \int_0^\infty {t^i\over i!}\vert \ell^{(\alpha)}_n(t)\vert dt = 2\sum_{m=1}^n(-1)^{m+1}\sum_{k=0}^i{(-1)^{k}\over (i-k)!}t_m^{i-k} \ell_{n-1-k}^{(\alpha+1+k)}(t_m),
\end{equation} with $t_m\in\mathcal{Z}(L_{n}^{(\alpha)})=\{t_1<\ldots<t_{n}\}.$
\end{theorem}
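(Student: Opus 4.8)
The plan is to follow the very same line as the proof of Theorem \ref{join}, but now dragging along the extra factor $t^i/i!$ and replacing the crude endpoint evaluation of Rodrigues' formula by the explicit antiderivative supplied by Lemma \ref{laguerre}. First I set $t_0=0$ and $t_{n+1}=\infty$, so that the zeros $t_1<\cdots<t_n$ split $(0,\infty)$ into the subintervals $(t_j,t_{j+1})$, $0\le j\le n$. Because the weight $t^{\alpha}e^{-t}$ is strictly positive on $(0,\infty)$ and the prefactor $n!/\Gamma(n+\alpha+1)$ is positive for $\alpha>-1$, the sign of $\ell_n^{(\alpha)}$ coincides with that of $L_n^{(\alpha)}$. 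Reading off the leading coefficient $(-1)^n/n!$ on the rightmost interval and counting one sign change per zero as one moves left, I would record that $|\ell_n^{(\alpha)}(t)|=(-1)^j\,\ell_n^{(\alpha)}(t)$ for $t\in(t_j,t_{j+1})$.

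Next I would write the target integral as a signed sum over the subintervals, $\int_0^\infty \frac{t^i}{i!}|\ell_n^{(\alpha)}(t)|\,dt=\sum_{j=0}^n(-1)^j\int_{t_j}^{t_{j+1}}\frac{t^i}{i!}\ell_n^{(\alpha)}(t)\,dt$, and insert the antiderivative $F(t):=\sum_{k=0}^i\frac{(-1)^k}{(i-k)!}t^{i-k}\ell_{n-1-k}^{(\alpha+1+k)}(t)$ provided by Lemma \ref{laguerre}, so that each inner integral equals $F(t_{j+1})-F(t_j)$. Rearranging the resulting signed telescope, each interior node $t_m$ with $1\le m\le n$ is collected with coefficient $(-1)^{m-1}-(-1)^m=2(-1)^{m+1}$, while two boundary contributions $(-1)^n F(t_{n+1})$ and $-F(t_0)$ remain; thus the sum equals $2\sum_{m=1}^n(-1)^{m+1}F(t_m)+(-1)^nF(t_{n+1})-F(t_0)$.

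The decisive step is then to show that both boundary terms vanish. At $t_0=0$, every summand of $F$ carries either a strictly positive power $t^{i-k}$ (when $k<i$) or, for $k=i$, the factor $\ell_{n-1-i}^{(\alpha+1+i)}$, which itself contains $t^{\alpha+1+i}$ with exponent $\alpha+1+i>0$ since $\alpha>-1$; in either case the term tends to $0$, so $F(0)=0$. At $t_{n+1}=\infty$, the exponential $e^{-t}$ hidden inside each $\ell_{n-1-k}^{(\alpha+1+k)}$ dominates every polynomial factor, forcing $F(t)\to 0$. With both endpoints contributing nothing, what survives is exactly $2\sum_{m=1}^n(-1)^{m+1}F(t_m)$, and expanding $F$ gives the claimed identity.

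The obstacle I expect is bookkeeping rather than conceptual. The one place to be careful is pinning down the sign $(-1)^j$ on each subinterval exactly, since an off-by-one there would propagate into the final $(-1)^{m+1}$; and verifying the vanishing at the origin, which is precisely where the hypothesis $\alpha>-1$ is genuinely used (to make the exponent $\alpha+1+i$ strictly positive). The constraint $0\le i\le n-1$ also enters here, as it keeps all indices $n-1-k$ nonnegative so that Lemma \ref{laguerre} applies throughout.
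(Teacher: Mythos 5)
Your proposal is correct and follows essentially the same route as the paper: split $(0,\infty)$ at the zeros, apply the antiderivative from Lemma \ref{laguerre} on each piece, telescope to get the factor $2(-1)^{m+1}$ at interior nodes, and kill the boundary terms at $0$ and $\infty$ using $\alpha>-1$ and the decay of $e^{-t}$. The only cosmetic difference is that you fix the sign $(-1)^j$ on $(t_j,t_{j+1})$ via the leading coefficient $(-1)^n/n!$ counted from the right, whereas the paper uses $L_n^{(\alpha)}(0)=\binom{n+\alpha}{n}>0$ counted from the left; both give the same sign pattern.
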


\begin{proof}  We write by $t_0=0$ and $t_{n+1}=+\infty.$ Note that $L_n^{(\alpha)}(0)=\binom{n+\alpha}{n}>0$, $|L_n^{(\alpha)}(t)|=(-1)^{m}L_n^{(\alpha)}(t)$ for $t_m<t<t_{m+1}$ and then
$$
 \int_0^\infty {t^i\over i!}\vert \ell^{(\alpha)}_n(t)\vert dt=\frac{1}{i!}\sum_{m=0}^n(-1)^{m}\int_{t_m}^{t_{m+1}}t^{i}\ell_{n}^{(\alpha)}(t)\,dt.
$$ We apply Lemma \ref{laguerre} to deduce that
 \begin{eqnarray*}
\int_0^\infty {t^i\over i!}\vert \ell^{(\alpha)}_n(t)\vert dt&=& \sum_{m=0}^n(-1)^{m}\sum_{k=0}^i{(-1)^k\over (i-k)!}t^{i-k} \ell_{n-1-k}^{(\alpha+1+k)}(t)\Big|_{t_m}^{t_{m+1}}\cr
&=& 2\sum_{m=1}^n(-1)^{m+1}\sum_{k=0}^i{(-1)^{k}\over (i-k)!}t_m^{i-k} \ell_{n-1-k}^{(\alpha+1+k)}(t_m),
\end{eqnarray*}
and we have used that  $\displaystyle\lim_{t\to 0+}t^{i-k}\ell_{n-1-k}^{(\alpha+1+k)}(t)=0=\displaystyle\lim_{t\to \infty}t^{i-k}\ell_{n-1-k}^{(\alpha+1+k)}(t).$
\end{proof}

\begin{corollary}\label{aaa} For $\alpha>-1$ and $n\in\N,$ the Laguerre functions $\ell_n^{(\alpha)}$ verify
\begin{eqnarray*} \lVert\ell_n^{(\alpha)}\rVert_1 &=&2\displaystyle\sum_{m=1}^n(-1)^{m+1}\ell_{n-1}^
{(\alpha)}(t_m),\\
 \int_0^\infty {t^{n-1}}\vert \ell^{(\alpha)}_n(t)\vert dt &=& {2 \over (\alpha+n)}\sum_{m=1}^n(-1)^{m+1}t_m^{n} \ell_{n-1}^{(\alpha)}(t_m),
\end{eqnarray*}
with $t_m\in\mathcal{Z}(L_n^{(\alpha)})=\{t_1<\ldots<t_n\}.$
\end{corollary}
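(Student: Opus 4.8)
The plan is to read off both identities from Theorem \ref{aa} by specialising (\ref{lag7}) to the endpoints $i=0$ and $i=n-1$, in each case collapsing the inner $k$-sum to a single term.

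For the first identity I would put $i=0$ in (\ref{lag7}). The inner sum then retains only the term $k=0$, which is $\ell_{n-1}^{(\alpha+1)}(t_m)$, giving $\lVert\ell_n^{(\alpha)}\rVert_1=2\sum_{m=1}^n(-1)^{m+1}\ell_{n-1}^{(\alpha+1)}(t_m)$. To turn the upper index $\alpha+1$ into $\alpha$, I would evaluate the contiguity relation (\ref{Laguerre2}), $\ell_n^{(\alpha)}(t)=\ell_{n-1}^{(\alpha)}(t)-\ell_{n-1}^{(\alpha+1)}(t)$, at a node $t_m$. Since $t_m\in\mathcal{Z}(L_n^{(\alpha)})$ and $\ell_n^{(\alpha)}$ carries the factor $t^\alpha e^{-t}L_n^{(\alpha)}$, we have $\ell_n^{(\alpha)}(t_m)=0$, whence $\ell_{n-1}^{(\alpha+1)}(t_m)=\ell_{n-1}^{(\alpha)}(t_m)$. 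Substituting gives the first formula.

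For the second identity the inner sum in (\ref{lag7}) with $i=n-1$ has $n$ terms and does not collapse termwise; instead I would identify it in closed form. By Lemma \ref{laguerre} the function $F(t):=\sum_{k=0}^{n-1}\frac{(-1)^k}{(n-1-k)!}t^{n-1-k}\ell_{n-1-k}^{(\alpha+1+k)}(t)$ is an antiderivative of $\frac{t^{n-1}}{(n-1)!}\ell_n^{(\alpha)}(t)$. The key claim is that this antiderivative is simply
$$\frac{d}{dt}\left[\frac{t^{n}\,\ell_{n-1}^{(\alpha)}(t)}{n+\alpha}\right]=t^{n-1}\ell_n^{(\alpha)}(t).$$
Granting this, both $F(t)$ and $\dfrac{t^{n}\ell_{n-1}^{(\alpha)}(t)}{(n-1)!\,(n+\alpha)}$ are antiderivatives of $\frac{t^{n-1}}{(n-1)!}\ell_n^{(\alpha)}$; each summand carries the factor $e^{-t}$, so both tend to $0$ as $t\to\infty$, forcing the constant of difference to vanish. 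Hence $F(t_m)=\dfrac{t_m^{n}\ell_{n-1}^{(\alpha)}(t_m)}{(n-1)!\,(n+\alpha)}$, and inserting this into (\ref{lag7}) and multiplying by $(n-1)!$ produces exactly $\int_0^\infty t^{n-1}|\ell_n^{(\alpha)}|\,dt=\frac{2}{\alpha+n}\sum_{m=1}^n(-1)^{m+1}t_m^{n}\ell_{n-1}^{(\alpha)}(t_m)$.

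The heart of the argument, and the step I expect to be the main obstacle, is verifying the displayed antiderivative identity. The cleanest route stays inside the Rodrigues representation already used in Lemma \ref{laguerre}, namely $\ell_n^{(\alpha)}=\frac{1}{\Gamma(n+\alpha+1)}\frac{d^n}{dt^n}(t^{n+\alpha}e^{-t})$. Writing $g(t):=t^{n-1+\alpha}e^{-t}$, so that $\ell_{n-1}^{(\alpha)}=\frac{g^{(n-1)}}{\Gamma(n+\alpha)}$ and $t^{n+\alpha}e^{-t}=t\,g(t)$, the Leibniz rule gives $\frac{d^n}{dt^n}(t g)=t\,g^{(n)}+n\,g^{(n-1)}$, whence $t^{n-1}\ell_n^{(\alpha)}=\frac{t^{n}g^{(n)}+n t^{n-1}g^{(n-1)}}{\Gamma(n+\alpha+1)}$, which is precisely $\frac{d}{dt}\big[t^{n}g^{(n-1)}/\Gamma(n+\alpha+1)\big]=\frac{d}{dt}\big[t^{n}\ell_{n-1}^{(\alpha)}/(n+\alpha)\big]$. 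The only delicate point is the bookkeeping of the $\Gamma$-factors under the index shift $n\mapsto n-1$; once that is arranged the verification is a one-line Leibniz computation, so the obstacle is mild. One may alternatively verify the same identity through the recurrence $nL_n^{(\alpha)}=(n+\alpha)L_{n-1}^{(\alpha)}-tL_{n-1}^{(\alpha+1)}$ together with $\frac{d}{dt}L_{n-1}^{(\alpha)}=-L_{n-2}^{(\alpha+1)}$ and the contiguous relation $L_{n-2}^{(\alpha+1)}=L_{n-1}^{(\alpha+1)}-L_{n-1}^{(\alpha)}$, but the Rodrigues–Leibniz computation is shorter and avoids invoking the derivative rule.
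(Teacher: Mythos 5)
Your proposal is correct. For the first identity your argument coincides with the paper's: take $i=0$ in (\ref{lag7}) and use (\ref{Laguerre2}) at the zeros $t_m$ to trade $\ell_{n-1}^{(\alpha+1)}(t_m)$ for $\ell_{n-1}^{(\alpha)}(t_m)$. For the second identity, however, you take a genuinely different route. The paper argues pointwise at each node: it rewrites each $\ell_{n-1-k}^{(\alpha+1+k)}(t_m)$ in terms of $L_{n-1-k}^{(\alpha+1+k)}(t_m)$, factors out $t_m^{n+\alpha}e^{-t_m}/\Gamma(\alpha+n+1)$, and collapses the inner $k$-sum via the iterated contiguity relation (\ref{euro}), namely $\sum_{k=0}^{n-1}(-1)^kL_{n-1-k}^{(\alpha+1+k)}=L_{n-1}^{(\alpha)}$. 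You instead identify the antiderivative of Lemma \ref{laguerre} in closed form: a Leibniz computation on the Rodrigues representation shows $\frac{d}{dt}\bigl[t^{n}\ell_{n-1}^{(\alpha)}(t)/(n+\alpha)\bigr]=t^{n-1}\ell_n^{(\alpha)}(t)$, and since both antiderivatives of $t^{n-1}\ell_n^{(\alpha)}/(n-1)!$ vanish as $t\to\infty$, they coincide, which evaluates the $k$-sum at every $t$ rather than only at the nodes. Your computation checks out, including the $\Gamma$-factor bookkeeping: with $g(t)=t^{n-1+\alpha}e^{-t}$ one indeed has $\frac{d^n}{dt^n}(tg)=tg^{(n)}+ng^{(n-1)}$ and $\ell_{n-1}^{(\alpha)}=g^{(n-1)}/\Gamma(n+\alpha)$, so the constant comes out right. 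It is worth noting that your identity $F=G$ is exactly (\ref{euro}) at level $n-1$ multiplied by the weight $t^{n+\alpha}e^{-t}/\Gamma(n+\alpha+1)$, so in effect you re-derive the paper's key ingredient instead of citing it: the paper's proof is shorter because (\ref{euro}) is already available in the text (a telescoping of $L_n^{(\alpha)}=L_n^{(\alpha+1)}-L_{n-1}^{(\alpha+1)}$), while yours is self-contained, avoids (\ref{euro}) altogether, and yields an independent calculus proof of that combinatorial identity as a byproduct.
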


\begin{proof}  To obtain the first equality, take $i=0$ in the equation (\ref{lag7}) and we use that $\ell_{n-1}^{(\alpha+1)}(t_m)=\ell_{n-1}^{(\alpha)}(t_m)$ for $ t_m\in\mathcal{Z}(L_n^{(\alpha)})$ by equality \eqref{Laguerre2}. Taking $i=n-1$ in (\ref{lag7}), we get that
\begin{eqnarray*}
\int_0^\infty {t^{n-1}\over (n-1)!}\vert \ell^{(\alpha)}_n(t)\vert dt &=&
2\sum_{m=1}^n(-1)^{m+1}\sum_{k=0}^{n-1}{(-1)^{k}\over (n-1-k)!}t_m^{n-1-k} \ell_{n-1-k}^{(\alpha+1+k)}(t_m)\cr
%2\sum_{m=1}^n(-1)^{m+n+1}\sum_{k=0}^{n-1}{(-1)^{k}\over k!}t_m^{k} \ell_{k}^{(\alpha+n-k)}(t_m)\cr
&=&{2 \over \Gamma(\alpha+n+1)}\sum_{m=1}^n(-1)^{m+1}t_m^{\alpha+n}e^{-t_m} \sum_{k=0}^{n-1}{(-1)^{k}} L_{n-1-k}^{(\alpha+1+k)}(t_m)\cr
&=&{2 \over (n-1)!(\alpha+n)}\sum_{m=1}^n(-1)^{m+1}t_m^{n} \ell_{n-1}^{(\alpha)}(t_m),
\end{eqnarray*}
where we have applied the equality (\ref{euro}).
\end{proof}

\begin{remark}\label{sobo}{\rm  Note that the first equality in Corollary   \ref{aaa}  improves the inequality
$$\lVert \ell_n^{(\alpha)} \rVert_1\geq \displaystyle\max_{t\in\mathcal{Z}(L_n^{(\alpha)})}|\ell_{n-1}^{(\alpha)}(t)|, \qquad n\geq 1,$$
 shown in \cite[Theorem 2.4 (iv)]{AM1}. In other hand, the equality
 $$\frac{d^k}{dt^k}\ell_{n}^{(\alpha)}(t)=\ell_{n+k}^{(\alpha-k)}(t), \qquad t\ge 0,$$
holds for   $k\geq 1$ (\cite[Proposition 2.3 (vi)]{AM1}) and then,   we  obtain  the following Sobolev norms
$$
\int_0^\infty \vert \frac{d^k}{dt^k}\ell_{n}^{(\alpha)}(t)\vert dt=\int_0^\infty \vert \ell_{n+k}^{(\alpha-k)}(t)\vert dt= 2\displaystyle\sum_{m=1}^{n+k}(-1)^{m+1}\ell_{n+k-1}^
{(\alpha-k)}(t_m),
$$
for $\alpha>k-1$ and $t_m\in\mathcal{Z}(L_{n+k}^{(\alpha-k)})$.

%Observe that if $i=0,$ then %$$\lVert\ell_n^{(\alpha)}\rVert_{(j,0)}=\int_0^{\infty}|\frac{d^j}{dt^j}\ell_n^{(\alpha)}(t)|=2\displaystyle\sum_{m=1}^{n+j}(-1)^{m+1}\ell_{n+j-1}^{(\alpha-j)}(t_m),$$ %with $t_m\in\mathcal{Z}(L_{n+j}^{(\alpha-j)})$ and $0<t_1<\ldots<t_{n+j}.$ So, the Sobolev norm is %$$\lVert\ell_n^{(\alpha)}\rVert_{q,1}=\displaystyle\sum_{j=1}^{q}\lVert\ell_n^{(\alpha)}\rVert_{(j,0)}.$$
}

\end{remark}
\begin{example} \label{ejlaguerre} {\rm We consider $\ell_2^{(0)}(t)={1\over 2}e^{-t}(t^2-4t+2)$. By Corollary \ref{aaa}  we conclude that
\begin{eqnarray*}
\int_0^\infty  {1\over 2}e^{-t}\vert t^2-4t+2\vert dt&=&2e^{-2}\left(e^{\sqrt{2}}(\sqrt{2}-1)+e^{-\sqrt{2}}(1+\sqrt{2})\right); \cr
\int_0^\infty  {1\over 2}e^{-t}t\vert t^2-4t+2\vert dt&=& 2e^{-2}\left(e^{\sqrt{2}}(5\sqrt{2}-7)+e^{-\sqrt{2}}(5\sqrt{2}+7)\right).
\end{eqnarray*}
 %In the case that   $\ell_3^{(0)}(t)={1\over 6} ( -t^3+9t^2-18t+6 )$ we apply Theorem \ref{aa} to obtain that
%$$
%{1\over 6}\int_0^\infty t\vert -t^3+9t^2-18t+6 \vert e^{-t}dt=
%$$
Now we take $\ell^{(2)}_1(t)={1\over 6} t^2( 3-t) e^{-t}$ to check that
$\displaystyle{
{1\over 6}\int_0^\infty t^2\vert 3-t\vert e^{-t}dt=9e^{-3}.}
$
Finally we take
$
\ell_2^{(1)}(t)={1\over 6}t( t^2-6t+6 ) e^{-t}
$
to get that
\begin{eqnarray*}
{1\over 6}\int_0^\infty t\vert t^2-6t+6 \vert e^{-t}dt&=&{e^{-3+\sqrt{3}}}(4\sqrt{3}-6)+{e^{-3-\sqrt{3}}}(4\sqrt{3}+6),\cr
{1\over 6}\int_0^\infty t^2\vert t^2-6t+6 \vert e^{-t}dt&=&2 e^{-3}\left(e^{\sqrt{3}}\left(14\sqrt{3}-24\right)+
e^{-\sqrt{3}}\left(14\sqrt{3}+24\right)\right).
\end{eqnarray*}
}

\end{example}

\section{Hermite polynomials}

\setcounter{theorem}{0}
\setcounter{equation}{0}

Hermite polynomials are polynomial solutions of second order differential equation \begin{equation}\label{ecdif}y''-2ty'+2ny=0. \end{equation} First Hermite polynomials are the following ones:
$$
H_0(t)=1; \qquad H_1(t)=2t;\qquad H_2(t)=4t^2-2;\qquad H_3(t)=4t(2t^2-3).
$$

In the following we consider a family of Hermite functions in $\R$ defined by
$$
h_n(t):=\frac{1}{2^n n!\sqrt{\pi}}e^{-t^2}H_n(t),\qquad t\in \RR,
$$
for $n\in \NN\cup\{0\}$. They have been studied in detail in \cite[Section 2]{AM2} and, in particular,  the following identity  \begin{equation}\label{Hermite}h_{n}^{(k)}=(-1)^k 2^k(n+1)\ldots (n+k)h_{n+k},\end{equation} is proved in  \cite[Proposition 2.3 (iii)]{AM2}. The family $\{ h_n \}_{n\geq 0}$ is a total set in $L^p(\R),$ with $1\leq p<\infty,$ and the optimal estimate of $\lVert h_n\rVert_1$ has a great importance on the study of vector-valued Hermite expansions, see more details in \cite{AM2}. By standard techniques,  the known bound   $$\Vert h_n\Vert_1 \leq \frac{1}{\sqrt{n! 2^n}}, \qquad n\in \NN,$$ is shown, see for example \cite[Remark 2.5]{AM2}. In the next proposition, we consider $L^1$-weighted norms.

\begin{proposition}\label{bound}For $n, i \in \N$, the Hermite functions $h_n$ verify
$$
\int_{-\infty}^{\infty}{\vert t\vert^i\over i!}\vert h_n(t)\vert dt\le {1\over \sqrt{2^n\,n! \,i!\sqrt{\pi}}}.
$$
\end{proposition}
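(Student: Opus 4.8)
The plan is to mirror the Cauchy--Schwarz argument of Proposition \ref{si}, adapted to the Hermite weight $e^{-t^2}$ on the whole line. First I would unfold the definition $h_n(t)=\frac{1}{2^n n!\sqrt{\pi}}e^{-t^2}H_n(t)$ to rewrite the left-hand side as
$$
\int_{-\infty}^{\infty}{\vert t\vert^i\over i!}\vert h_n(t)\vert\, dt=\frac{1}{2^n n!\sqrt{\pi}\,i!}\int_{-\infty}^{\infty}\vert t\vert^{i}e^{-t^2}\vert H_n(t)\vert\, dt.
$$
The key move is to split the Gaussian factor symmetrically, writing $e^{-t^2}=e^{-t^2/2}\,e^{-t^2/2}$, and to apply the Cauchy--Schwarz inequality to the pair of functions $\vert t\vert^{i}e^{-t^2/2}$ and $e^{-t^2/2}\vert H_n(t)\vert$. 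This yields
$$
\int_{-\infty}^{\infty}\vert t\vert^{i}e^{-t^2}\vert H_n(t)\vert\, dt\le\left(\int_{-\infty}^{\infty}t^{2i}e^{-t^2}\,dt\right)^{1/2}\left(\int_{-\infty}^{\infty}e^{-t^2}H_n(t)^2\,dt\right)^{1/2},
$$
which is the exact analogue of the step used for the Laguerre functions, the second factor being the orthogonality integral that produces the normalizing constant $k_n$.

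Next I would evaluate the two integrals explicitly. The second is precisely the Hermite orthogonality relation \eqref{ortos}, giving $\int_{-\infty}^{\infty}e^{-t^2}H_n(t)^2\,dt=2^n n!\sqrt{\pi}=k_n$; equivalently, the normalized functions $t\mapsto (2^n n!\sqrt{\pi})^{-1/2}e^{-t^2/2}H_n(t)$ form a Hilbertian basis of $L^2(\RR)$. For the first, the standard Gaussian moment is $\int_{-\infty}^{\infty}t^{2i}e^{-t^2}\,dt=\Gamma\!\left(i+\tfrac12\right)=\frac{(2i)!}{4^i i!}\sqrt{\pi}$. Substituting both back into the prefactor and collecting powers of $\pi^{1/4}$ (two of them combine to cancel the leading $1/\sqrt{\pi}$), the estimate collapses to
$$
\int_{-\infty}^{\infty}{\vert t\vert^i\over i!}\vert h_n(t)\vert\, dt\le\frac{1}{\sqrt{2^n n!}}\cdot\frac{\sqrt{(2i)!}}{2^{i}(i!)^{3/2}}.
$$

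To reach the stated bound $1/\sqrt{2^n n!\,i!\sqrt{\pi}}$, it then suffices to verify the elementary inequality $\dfrac{(2i)!}{4^i (i!)^2}\le\dfrac{1}{\sqrt{\pi}}$ for $i\ge1$, which is where the real content lies. I would dispose of it by rewriting the central binomial factor in Gamma form, namely $\frac{(2i)!}{4^i(i!)^2}=\frac{\Gamma(i+1/2)}{\sqrt{\pi}\,\Gamma(i+1)}$, so that the claim reduces to $\Gamma(i+\tfrac12)\le\Gamma(i+1)$. Since $\Gamma$ is increasing on $[3/2,\infty)$ and $i+\tfrac12\ge\tfrac32$ whenever $i\ge1$, this follows at once from monotonicity; alternatively one may invoke the classical Wallis bound $\binom{2i}{i}\le 4^i/\sqrt{\pi i}\le 4^i/\sqrt{\pi}$. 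This last reduction is the only genuinely nonroutine point, and I expect it to be the main obstacle: the Cauchy--Schwarz splitting and the two integral evaluations are mechanical, but one must handle the combinatorial constant carefully to land exactly on the factor $1/\sqrt{i!\sqrt{\pi}}$ rather than a weaker power of $i$.
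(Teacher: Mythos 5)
Your proposal is correct and follows essentially the same route as the paper: the same Cauchy--Schwarz splitting $e^{-t^2}=e^{-t^2/2}\cdot e^{-t^2/2}$, the Hermite orthogonality integral $2^n n!\sqrt{\pi}$, and the Gaussian moment $\Gamma(i+\tfrac12)$. The only difference is cosmetic: the paper asserts the final inequality directly (implicitly using $\Gamma(i+\tfrac12)\le \Gamma(i+1)$ for $i\ge 1$), whereas you make that last reduction explicit via the central binomial coefficient and monotonicity of $\Gamma$ on $[3/2,\infty)$, which is a welcome clarification of the step the paper leaves unjustified.
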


\begin{proof} We apply the Cauchy-Schwarz  inequality to obtain that
\begin{eqnarray*}
\int_{-\infty}^{\infty}{\vert t\vert^i\over i!}\vert h_n(t)\vert dt
&\le&{2\over i!\sqrt{2^n n!\sqrt{\pi}}}\left(\int_0^\infty t^{2i}e^{-t^2}dt
\right)^{1\over 2}\left({1\over 2}\int_{-\infty}^{\infty}{e^{-t^2}\vert H_n(t)\vert^2\over 2^n n!\sqrt{\pi}} dt\right)^{1\over 2}\cr
&=&{2\over i!\sqrt{2^n n!\sqrt{\pi}}}\left({\Gamma(i+{1\over 2})\over 2}\right)^{1\over 2}\left({1\over 2}\right)^{1\over 2}\le {1\over \sqrt{2^n n! \, i!\,\sqrt{\pi}}},
\end{eqnarray*}
where we have used that functions $t\mapsto \displaystyle{e^{-t^2\over 2}H_n(t)\over \sqrt{2^n n!\sqrt{\pi}}}$ (for $n\ge 0$) form a Hilbertian basis on $L^2(\R)$.
\end{proof}

%\noindent{\bf Remark.} Using the doubling formula $\Gamma({1\over 2}+z)\Gamma(z)= 2^{1-2z}\sqrt{\pi}\Gamma(2z)$ %a sharper bound is obtained in Proposition \ref{bound}.

The proof of the next lemma runs parallel to the proof of Lemma \ref{laguerre} and we do not  include it here.

\begin{lemma} \label{keyhermite} Take $n\in \N$ and $0\le i\le n-1$. Then the following identity holds:
\begin{eqnarray*}\int {t^i\over i!}h_n(t)\,dt&=&-{1\over n!}\displaystyle\sum_{k=0}^{i}\frac{(n-1-k)!}{2^{k+1}(i-k)!}t^{i-k}h_{n-1-k}(t).
\end{eqnarray*}

\end{lemma}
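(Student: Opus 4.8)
The plan is to mimic the proof of Lemma \ref{laguerre} exactly, as the authors themselves indicate. The starting point is the Rodrigues-type representation of the Hermite functions. From the defining formula $h_n(t)=\frac{1}{2^n n!\sqrt{\pi}}e^{-t^2}H_n(t)$ together with the Rodrigues formula for Hermite polynomials (from the Introduction's table, with $\mu_n=(-1)^n$ and $\omega(t)=e^{-t^2}$), one has $e^{-t^2}H_n(t)=(-1)^n\frac{d^n}{dt^n}e^{-t^2}$, so that
$$
h_n(t)=\frac{(-1)^n}{2^n n!\sqrt{\pi}}\frac{d^n}{dt^n}\big(e^{-t^2}\big)(t).
$$
I would substitute this into the integral $\int \frac{t^i}{i!}h_n(t)\,dt$ to reduce everything to integrating $t^i$ against an $n$-th derivative.

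The core computation is then $i$ successive integrations by parts. Each integration by parts moves one derivative off the Gaussian factor and differentiates $t^i/i!$, producing the telescoping sum
$$
\int \frac{t^i}{i!}\frac{d^n}{dt^n}\big(e^{-t^2}\big)\,dt=\sum_{k=0}^i\frac{(-1)^k}{(i-k)!}t^{i-k}\frac{d^{n-1-k}}{dt^{n-1-k}}\big(e^{-t^2}\big)(t),
$$
valid since $i\le n-1$ guarantees the derivative orders $n-1-k$ remain nonnegative throughout. This is the same bookkeeping that appears in Lemma \ref{laguerre}, only with $t^{n+\alpha}e^{-t}$ replaced by $e^{-t^2}$.

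The final step is to re-express each surviving term $\frac{d^{n-1-k}}{dt^{n-1-k}}e^{-t^2}$ back in terms of a Hermite function. Reading the Rodrigues identity backwards gives $\frac{d^{m}}{dt^{m}}e^{-t^2}=(-1)^m 2^m m!\sqrt{\pi}\,h_m(t)$, and applying this with $m=n-1-k$ converts the sum into Hermite functions $h_{n-1-k}$. Collecting the resulting constants, each term acquires a factor $\frac{(-1)^{n-1-k}2^{n-1-k}(n-1-k)!\sqrt{\pi}}{2^n n!\sqrt{\pi}}$ times $(-1)^k$ from the integration by parts; tracking the powers of $2$ and the signs should yield precisely the stated coefficient $-\frac{(n-1-k)!}{2^{k+1}(i-k)!}\cdot\frac{1}{n!}$. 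The only real obstacle is this constant-chasing: verifying that the accumulated powers of $(-1)$ combine to the single overall minus sign and that $2^{n-1-k}/2^n=2^{-k-1}$ produces the $2^{k+1}$ in the denominator. There are no analytic subtleties, since $i\le n-1$ keeps all derivative orders legitimate and the antiderivative here is indefinite, so no boundary terms at $\pm\infty$ need be evaluated.
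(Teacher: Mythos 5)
Your proposal is correct and is exactly the argument the paper intends: the paper omits this proof, saying only that it ``runs parallel to the proof of Lemma \ref{laguerre}'', and your route (Rodrigues' formula $h_n=\frac{(-1)^n}{2^n n!\sqrt{\pi}}\frac{d^n}{dt^n}e^{-t^2}$, then $i$ integrations by parts terminating because $(t^i/i!)^{(i+1)}=0$, then converting $\frac{d^{n-1-k}}{dt^{n-1-k}}e^{-t^2}$ back to $h_{n-1-k}$) is precisely that parallel. The constant-chasing you flag does close: the signs combine as $(-1)^n(-1)^k(-1)^{n-1-k}=-1$ uniformly in $k$, and $2^{n-1-k}/2^n=2^{-(k+1)}$ with the $\sqrt{\pi}$'s cancelling gives the stated coefficient $-\frac{(n-1-k)!}{2^{k+1}(i-k)!\,n!}$.
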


\begin{theorem}\label{zeroshermite}  Let be $n\in \N$, $0\le i\le n-1$ and  $\mathcal{Z}(H_n)=\{ t_1<\ldots<t_{n}\}$.

\begin{itemize}
\item[(i)] If $i$ is even, then the Hermite functions $h_n$ satisfy
 \begin{eqnarray*} \int_{-\infty}^\infty{t^i\over i!} \vert h_n(t)\vert dt&=&{1\over n!}\displaystyle\sum_{m=1}^{n}(-1)^{m+n}\displaystyle\displaystyle\sum_{k=0}^{i}\frac{ (n-1-k)!}{2^{k}(i-k)!}t^{i-k}_mh_{n-1-k}(t_m).
\end{eqnarray*}
\item[(ii)]  If  $i$ is odd and $n$ even, then they verify
 \begin{eqnarray*} \int_{-\infty}^\infty{\vert t\vert^i\over i!} \vert h_n(t)\vert dt&=&{1\over n!}\displaystyle\sum_{m=1}^{{n\over 2}}(-1)^{m+1}\displaystyle\displaystyle\sum_{k=0}^{i}\frac{ (n-1-k)!}{2^{k}(i-k)!}t^{i-k}_mh_{n-1-k}(t_m)\cr
 &+&{(-1)^{{n\over 2}}(n-1-i)!\over n!2^i}h_{n-1-i}(0)\cr&+ &{1\over n!} \sum_{m={{n\over 2}+1}}^n(-1)^{m}\displaystyle\displaystyle\sum_{k=0}^{i}\frac{ (n-1-k)!}{2^{k}(i-k)!}t^{i-k}_mh_{n-1-k}(t_m);
\end{eqnarray*} and in the case that $n$ is odd,
 \begin{eqnarray*} \int_{-\infty}^\infty{\vert t\vert^i\over i!} \vert h_n(t)\vert dt&=&{1\over n!}\displaystyle\sum_{m=1}^{n-1\over 2}(-1)^{m}\displaystyle\displaystyle\sum_{k=0}^{i}\frac{ (n-1-k)!}{2^{k}(i-k)!}t^{i-k}_mh_{n-1-k}(t_m)\cr
 &\quad&+{1\over n!}\sum_{m={n+3\over 2}}^n(-1)^{m+1}\displaystyle\displaystyle\sum_{k=0}^{i}\frac{ (n-1-k)!}{2^{k}(i-k)!}t^{i-k}_mh_{n-1-k}(t_m).
\end{eqnarray*}
 \end{itemize}

\end{theorem}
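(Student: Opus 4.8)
The plan is to mimic the proof of Theorem \ref{aa}, using the antiderivative supplied by Lemma \ref{keyhermite}, while paying careful attention to the parity of $H_n$ and to the location of the origin among the zeros. Write
\[
F(t)=-\frac{1}{n!}\sum_{k=0}^{i}\frac{(n-1-k)!}{2^{k+1}(i-k)!}\,t^{i-k}h_{n-1-k}(t)
\]
for the antiderivative of $\frac{t^i}{i!}h_n$ furnished by Lemma \ref{keyhermite}. Since $H_n$ has positive leading coefficient and only simple real zeros, and the prefactor $e^{-t^2}/(2^n n!\sqrt\pi)$ in the definition of $h_n$ is positive, the function $h_n$ has constant sign $(-1)^{n-m}$ on each interval $(t_m,t_{m+1})$, where $t_0=-\infty$ and $t_{n+1}=+\infty$; moreover $F(\pm\infty)=0$ by the Gaussian decay of every $h_{n-1-k}$.

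First I would treat the even case (i). Here $|t|^i=t^i$, so exactly as in Theorem \ref{aa} I split $\int_{-\infty}^\infty$ over the $n+1$ sign-intervals, insert the factor $(-1)^{n-m}$, and apply $F$ on each piece. An Abel summation reindexing the endpoint contributions collapses the telescoping structure: the two infinite endpoints drop out, and each interior zero $t_m$ survives with a doubled coefficient $-2(-1)^{n-m}F(t_m)$. Substituting $F$ and using $(-1)^{n-m}=(-1)^{n+m}$ yields precisely the formula in (i), valid for every parity of $n$.

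For the odd case (ii), $|t|^i$ is no longer a polynomial, so I would split the integral at the origin and use $|t|^i=-t^i$ on $(-\infty,0)$. On each half the same sign-splitting and Abel summation apply, but now the origin is an additional breakpoint, so $F(0)$ survives as a boundary term. Only the $k=i$ summand survives at $t=0$, giving $F(0)=-\tfrac{(n-1-i)!}{n!\,2^{i+1}}h_{n-1-i}(0)$. The two subcases differ by where $0$ sits among the zeros. When $n$ is even, $0$ lies in the central interval $(t_{n/2},t_{n/2+1})$ and is not a zero; the left half contributes $\sum_{m=1}^{n/2}(-1)^{m+1}$-weighted zero-terms together with a boundary piece $-(-1)^{n/2}F(0)$, and the right half contributes $\sum_{m=n/2+1}^{n}(-1)^{m}$-weighted zero-terms together with a second boundary piece $-(-1)^{n/2}F(0)$. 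The extra minus sign from $|t|^i=-t^i$ on the negative half is exactly what makes these two boundary pieces add rather than cancel, doubling $F(0)$ and producing the central term with $2^i$ (not $2^{i+1}$) in the denominator. When $n$ is odd, $0=t_{(n+1)/2}$ is itself a zero; the boundary term is again built from $F(0)$, but now $n-1-i$ is odd, so $h_{n-1-i}$ is an odd function and $h_{n-1-i}(0)=0$, killing the central term and leaving only the two zero-sums with weights $(-1)^{m}$ and $(-1)^{m+1}$.

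I expect the main obstacle to be purely organisational: keeping the signs straight through the split at the origin and the Abel reindexing on each half, and correctly converting $(-1)^{n-m}=(-1)^{n+m}$ into $(-1)^{m}$ or $(-1)^{m+1}$ according to the parity of $n$. The one genuinely delicate point is the bookkeeping of the two $F(0)$ contributions in the $n$-even case, where the sign flip $|t|^i=-t^i$ on the negative half must be tracked exactly to obtain the factor $2$ that turns $2^{i+1}$ into $2^i$.
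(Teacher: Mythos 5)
Your proposal is correct and follows essentially the same route as the paper's proof: split the integral at the sign changes of $h_n$ (and additionally at the origin when $i$ is odd), apply the antiderivative from Lemma \ref{keyhermite}, and collect the telescoping endpoint contributions, with the $|t|^i$ sign flip on $(-\infty,0)$ producing the doubling of $F(0)$ in the $n$-even case. The only deviation is cosmetic: for $n$ odd the paper cancels the two $F(0)$ contributions by their opposite signs, whereas you observe that $n-1-i$ is odd so $h_{n-1-i}(0)=0$ — both arguments are valid.
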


\begin{proof} (i) We write $t_0=-\infty $ and $t_{n+1}=+\infty$ to get that
 \begin{eqnarray*}\int_{-\infty}^\infty{t^i\over i!} \vert h_n(t)\vert dt&=& \frac{1}{i!}\sum_{m=0}^n (-1)^{m+n} \int_{t_m}^{t_{m+1}}t^ih_n(t)dt\cr &=& {1\over n!}\sum_{m=0}^n (-1)^{m+n+1}\displaystyle\sum_{k=0}^{i}\frac{ (n-1-k)!}{2^{k+1}(i-k)!}t^{i-k}h_{n-1-k}(t)\Big|_{t_m}^{t_{m+1}}
 \cr &=& {1\over n!}\sum_{m=1}^n (-1)^{m+n}\displaystyle\sum_{k=0}^{i}\frac{ (n-1-k)!}{2^{k}(i-k)!}t^{i-k}_mh_{n-1-k}(t_m),
\end{eqnarray*}
where  we have applied the Lemma \ref{keyhermite}  and $\displaystyle\lim_{t\to \pm \infty}t^{i-k}h_{n-1-k}(t)=0.$

(ii) Since $n$ is even, then we prove that
\begin{eqnarray*}\int_{-\infty}^\infty{\vert t\vert^i\over i!} \vert h_n(t)\vert dt&=& \frac{1}{i!}\sum_{m=0}^{{n\over 2}-1} (-1)^{m+1} \int_{t_m}^{t_{m+1}}t^ih_n(t)dt+{(-1)^{{n\over 2}-1}\over i} \int_{t_{n\over 2}}^{0}t^ih_n(t)dt\cr &+& {(-1)^{{n\over 2}}\over i!} \int^{t_{{n\over 2}+1}}_{0}t^ih_n(t)dt+ \frac{1}{i!}\sum^{n}_{m={n\over 2}+1} (-1)^{m} \int_{t_m}^{t_{m+1}}t^ih_n(t)dt.\cr
\end{eqnarray*}
By Lemma \ref{keyhermite}, we deduce that
$$
\int^{t_{{n\over 2}+1}}_{0}t^ih_n(t)dt=-{1\over n!}\displaystyle\sum_{k=0}^{i}\frac{ i!(n-1-k)!}{2^{k+1}(i-k)!}t_{{n\over 2}+1}^{i-k}h_{n-1-k}(t_{{n\over 2}+1})+{i!(n-1-i)!\over 2^{i+1}n!}h_{n-1-i}(0),
$$
and then
\begin{eqnarray*}
&\quad& {(-1)^{{n\over 2}}\over i!} \int^{t_{{n\over 2}+1}}_{0}t^ih_n(t)dt+ \frac{1}{i!}\sum^{n}_{m={n\over 2}+1} (-1)^{m} \int_{t_m}^{t_{m+1}}t^ih_n(t)dt\cr
&\quad&=(-1)^{{n\over 2}}{(n-1-i)!\over 2^{i+1}n!}h_{n-1-i}(0)+ {1\over n!} \sum_{m={{n\over 2}+1}}^n(-1)^{m}\displaystyle\displaystyle\sum_{k=0}^{i}\frac{ (n-1-k)!}{2^{k}(i-k)!}t^{i-k}_mh_{n-1-k}(t_m).
\end{eqnarray*}
Analogously we get the identities for the first two summands and we conclude the result.

Finally we consider the case that $n$ is odd; in this case $t_{n+1\over 2}=0$ and we get that
\begin{eqnarray*}&\quad&\int_{-\infty}^\infty{\vert t\vert^i\over i!} \vert h_n(t)\vert dt= \frac{1}{i!}\sum_{m=0}^{{n-1\over 2}} (-1)^{m} \int_{t_m}^{t_{m+1}}t^ih_n(t)dt+ \frac{1}{i!}\sum^{n}_{m={n+1\over 2}} (-1)^{m+1} \int_{t_m}^{t_{m+1}}t^ih_n(t)dt\cr
&\quad&=\frac{1}{n!}\sum_{m=1}^{{n-1\over 2}} (-1)^{m}\displaystyle\sum_{k=0}^{i}\frac{ (n-1-k)!}{2^{k}(i-k)!}t^{i-k}_mh_{n-1-k}(t_m)+ (-1)^{{n+1\over 2}}{(n-1-i)!\over 2^{i+1}n!}h_{n-1-i}(0)\cr
&\quad&+(-1)^{{n-1\over 2}}{(n-1-i)!\over 2^{i+1}n!}h_{n-1-i}(0)+\frac{1}{n!}\sum^{n}_{m={n+3\over 2}} (-1)^{m+1}\displaystyle\sum_{k=0}^{i}\frac{ (n-1-k)!}{2^{k}(i-k)!}t^{i-k}_mh_{n-1-k}(t_m)\cr
&\quad&=\frac{1}{n!}\sum_{m=1}^{{n-1\over 2}} (-1)^{m}\displaystyle\sum_{k=0}^{i}\frac{ (n-1-k)!}{2^{k}(i-k)!}t^{i-k}_mh_{n-1-k}(t_m)\cr&\quad&\qquad\qquad+\frac{1}{n!}\sum^{n}_{m={n+3\over 2}} (-1)^{m+1}\displaystyle\sum_{k=0}^{i}\frac{ (n-1-k)!}{2^{k}(i-k)!}t^{i-k}_mh_{n-1-k}(t_m),
\end{eqnarray*}
and we conclude the result.
\end{proof}

\begin{corollary}\label{ruba} For $n\in\N$ the Hermite functions $h_n$ satisfy \begin{eqnarray*} \lVert h_n\rVert_1&=&\frac{1}{n}\displaystyle\sum_{m=1}^n(-1)^{m+n}h_{n-1}(t_m),\cr
\int_{-\infty}^\infty{t^{2n}\over (2n)!} \vert h_{2n+1}(t)\vert dt&=&{1\over (2n+1)!}\displaystyle\sum_{m=1}^{2n+1}(-1)^{m+1}\displaystyle\displaystyle\sum_{k=0}^{2n}\frac{ 1}{2^{k}}t^{2n-k}_mh_{2n-k}(t_m);\cr
 \int_{-\infty}^\infty{\vert t\vert^{2n-1}\over (2n-1)!} \vert h_{2n}(t)\vert dt&=&{1\over (2n)!}\displaystyle\sum_{m=1}^{{n}}(-1)^{m+1}\displaystyle\displaystyle\sum_{k=0}^{2n-1}\frac{ 1}{2^{k}}t^{2n-1-k}_mh_{2n-1-k}(t_m)+{(-1)^{{n}}\over (2n)!\,2^{2n-1}\sqrt{\pi}}\cr&+ &{1\over (2n)!} \sum_{m={{n}+1}}^{2n}(-1)^{m}\displaystyle\displaystyle\sum_{k=0}^{2n-1}\frac{ 1}{2^{k}}t^{2n-1-k}_mh_{2n-1-k}(t_m);
\end{eqnarray*} with $t_m\in\mathcal{Z}(H_n)=\{t_1<\ldots<t_n\}.$
\end{corollary}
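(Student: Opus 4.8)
The plan is to read off all three identities as the extreme cases $i=0$ and $i=n-1$ of Theorem \ref{zeroshermite}, after matching the parity hypotheses and simplifying the factorial coefficients. Writing $N$ for the degree of the Hermite function involved, I would obtain the first equality from part (i) with weight exponent $i=0$ and $N=n$, the second from part (i) applied to the odd-degree function $h_{2n+1}$ with $i=2n$, and the third from part (ii) applied to the even-degree function $h_{2n}$ with $i=2n-1$. In each of the last two cases the chosen exponent is the largest admissible value $i=N-1$, which is precisely where the coefficients collapse.

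For the first equality I would put $i=0$ in part (i) of Theorem \ref{zeroshermite}. The inner sum over $k$ then consists of the single term $k=0$, the power $t_m^{i-k}=t_m^0$ disappears, and the coefficient becomes $\frac{(n-1)!}{2^0\,0!}=(n-1)!$; dividing by $n!$ leaves $\frac1n$, so $\lVert h_n\rVert_1=\frac1n\sum_{m=1}^n(-1)^{m+n}h_{n-1}(t_m)$. Since $i=0$ is even, this branch applies for every $n$, regardless of the parity of the degree.

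For the second and third equalities the key observation is that taking $i$ maximal, $i=N-1$, forces $i-k=N-1-k$, so the factorial ratio $\frac{(N-1-k)!}{(i-k)!}$ equals $1$ for every $k$, and the inner sum reduces to $\sum_{k=0}^{N-1}2^{-k}t_m^{N-1-k}h_{N-1-k}(t_m)$. For the second equality I would take $N=2n+1$ in the odd-degree instance of part (i) and $i=2n$, using $(-1)^{m+N}=(-1)^{m+2n+1}=(-1)^{m+1}$. For the third I would take $N=2n$ and $i=2n-1$ in the branch of part (ii) corresponding to odd $i$ and even degree $N$; the same factorial collapse handles the two $m$-sums, while the isolated boundary term carries $(N-1-i)!=0!=1$ and the value $h_{N-1-i}(0)=h_0(0)=\frac{1}{\sqrt{\pi}}$, producing the term $\frac{(-1)^n}{(2n)!\,2^{2n-1}\sqrt{\pi}}$.

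There is no genuine analytic obstacle here, since Theorem \ref{zeroshermite} already carries out the integration; the difficulty is entirely bookkeeping. The points most likely to trip up a careful writer are matching the parity of the degree to the correct branch of Theorem \ref{zeroshermite}, tracking the sign $(-1)^{m+N}$ through the reindexing of the outer sum, and evaluating $h_0(0)=1/\sqrt{\pi}$ correctly from the definition $h_0(t)=\pi^{-1/2}e^{-t^2}$.
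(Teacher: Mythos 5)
Your proposal is correct and is essentially the paper's own (implicit) argument: the paper states Corollary \ref{ruba} without a separate proof precisely because it is the direct specialization of Theorem \ref{zeroshermite} that you carry out, namely part (i) with $i=0$ (giving the factor $(n-1)!/n!=1/n$), part (i) with degree $2n+1$ and $i=2n$ (where $(-1)^{m+2n+1}=(-1)^{m+1}$ and the factorials cancel), and part (ii) with degree $2n$ and $i=2n-1$ (where the boundary term evaluates via $h_0(0)=1/\sqrt{\pi}$). All of your coefficient and sign computations check out against the stated identities.
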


\begin{example}\label{ejhermite}{\rm Let us consider functions $h_1,\, h_2 $ and $h_3$ defined by  $$\displaystyle{h_1(t)= {1\over \sqrt{\pi}}te^{-t^2}}, \qquad \displaystyle{h_2(t)={1\over 4\sqrt{\pi}}e^{-t^2}(2t^2-1)}, \qquad\displaystyle{h_3(t)= {1\over 12\sqrt{\pi}}e^{-t^2}t(2t^2-3)}.$$
Then we apply Theorem \ref{zeroshermite} and Corollary \ref{ruba} to get that
\begin{eqnarray*}
4\sqrt{\pi}\int_{-\infty}^\infty\vert h_2(t)\vert dt=\int_{-\infty}^\infty\vert 2t^2-1\vert e^{-t^2}dt&=&2\sqrt{2}e^{-1\over 2};\cr
4\sqrt{\pi}\int_{-\infty}^\infty\vert t\,h_2(t)\vert dt=\int_{-\infty}^\infty\vert t(2t^2-1)\vert e^{-t^2}dt&=&4e^{-1\over 2}-1;\cr
12\sqrt{\pi}\int_{-\infty}^\infty\vert h_3(t)\vert dt=\int_{-\infty}^\infty\vert t(2t^2-3)\vert e^{-t^2}dt&=&1+4e^{-3\over 2};\cr
12\sqrt{\pi}\int_{-\infty}^\infty\vert t\,h_3(t)\vert dt=\int_{-\infty}^\infty\vert t^2(2t^2-3)\vert e^{-t^2}dt&=&3\sqrt{6}e^{-3\over 2};\cr
12\sqrt{\pi}\int_{-\infty}^\infty\vert  t^2h_3(t)\vert dt=
\int_{-\infty}^\infty\vert t^3(2t^2-3)\vert e^{-t^2}dt&=&2\left(7e^{-3\over 2}-{1\over 2}\right).\cr
\end{eqnarray*}
}
\end{example}

\section{Jacobi polynomials}

\setcounter{theorem}{0}
\setcounter{equation}{0}

Jacobi polynomials $P_n^{(\alpha, \beta)}$ where
$$
P_n^{(\alpha, \beta)}(t):= \sum_{j=0}^n {1\over 2^n}{n+\alpha \choose n-j}{n+\beta\choose j}\left({t-1}\right)^{j}\left({t+1}\right)^{n-j}, \quad t\in \R,
$$ for $n\in \N$ and $\alpha, \beta \in \R$,
are polynomials solutions of second order differential equation \begin{equation}\label{ecdif}
(1-t^2)y''(t)+(\beta-\alpha-(\alpha+\beta+2)t)y'(t)+n(n+\alpha+\beta+1)y(t)=0. \end{equation}
Note that $P_n^{(\alpha, \beta)}(1)=\displaystyle{n+\alpha \choose n}$.

 Other interesting identities are the following ones,
\begin{eqnarray*}
P_n^{(\alpha, \beta)}(t)&=&(-1)^{n}P_n^{(\beta, \alpha)}(-t),\cr \cr
{d\over dt}P_n^{(\alpha, \beta)}(t)&=&{n+\alpha+\beta+1\over 2}P_{n-1}^{(\alpha+1, \beta+1)}(t), \quad t\in \R.
\end{eqnarray*}
First Jacobi polynomials are $P_0^{(\alpha, \beta)}(t)=1;$ and $P_1^{(\alpha, \beta)}(t)={1\over 2}(\alpha+\beta+2)t+{1\over 2}\left(\alpha-\beta\right)$. For $\alpha=\beta=0$, polynomials $P_n^{(0,0)}$ are the known as Legendre polynomials, see for example \cite[Chapter 4] {Szego}.

In the following we define Jacobi functions $p_n^{(\alpha,\beta)}$ by
\begin{eqnarray*}
p_n^{(\alpha,\beta)}(t):&=&\frac{(2n+\alpha+\beta+1)\Gamma(n+\alpha+\beta+1)n!}{2^{\alpha+\beta+1}\Gamma(n+\alpha+1)\Gamma(n+\beta+1)}(1-t)^{\alpha}(1+t)^{\beta}P_n^{(\alpha, \beta)}(t)\cr
&=&\frac{(-1)^n(2n+\alpha+\beta+1)\Gamma(n+\alpha+\beta+1)}{2^{n+\alpha+\beta+1}\Gamma(n+\alpha+1)\Gamma(n+\beta+1)}\frac{d^n}{dt^n}((1-t)^{n+\alpha}(1+t)^{n+\beta})(t),
\end{eqnarray*}
for $n\in \NN\cup\{0\}$ and $t\in (-1,1)$.  The following lemma contains some results for Jacobi functions $p_n^{(\alpha,\beta)}$ which are similar to some equalities for Jacobi polynomials $P_n^{(\alpha,\beta)}$.
\begin{lemma}\label{lemin} For $n\in \N$ and $\alpha, \beta \in \R$, the following equalities hold.
\begin{itemize}
\item[(i)]
$$
\left(p_n^{(\alpha,\beta)}\right)^{(k)}={(-1)^k\over 2^{k}}{\Gamma(n+\alpha+\beta+1)\over\Gamma(n+\alpha+\beta+1-k) }p_{n+k}^{(\alpha-k,\beta-k)}, \qquad k\in \N.
$$

\item[(ii)]
\begin{eqnarray*}
&\,&p_{n-1}^{(\alpha+1,\beta+1)}(t)={(n+\alpha+\beta)(n+\alpha+\beta+1)\over (2n+\alpha+\beta)(2n+\alpha+\beta-1)}p_{n-1}^{(\alpha,\beta)}(t) \cr&\,&+ {(n+\alpha+\beta+1)(\alpha-\beta)\over (2n+\alpha+\beta)(2n+\alpha+\beta+2)}p_{n}^{(\alpha,\beta)}(t)-{(n+\alpha+1)(n+\beta+1)\over (2n+\alpha+\beta+2)((2n+\alpha+\beta+3)}p_{n+1}^{(\alpha, \beta)}(t).
\end{eqnarray*}

\item[(iii)]

\begin{eqnarray*}
&\,&{(n+\alpha+1)(n+\beta+1)\over (2n+\alpha+\beta+3)}p_{n+1}^{(\alpha,\beta)}(t) = -{(2n+\alpha+\beta+2)(n+\alpha+\beta)n\over (2n+\alpha+\beta)((2n+\alpha+\beta-1)}p_{n-1}^{(\alpha, \beta)}(t)\cr
&\,&+{1\over 2(2n+\alpha+\beta)}\left({(2n+\alpha+\beta+2)(2n+\alpha+\beta)t+\alpha^2-\beta^2}\right)p_{n}^{(\alpha,\beta)}(t).
\end{eqnarray*}

\item[(iv)]

\begin{eqnarray*}
p_{n-1}^{(\alpha+1,\beta+1)}(t)={(n+\alpha+\beta)(2n+\alpha+\beta+1)\over (2n+\alpha+\beta)(2n+\alpha+\beta-1)}p_{n-1}^{(\alpha,\beta)}(t)+ {1\over 2}\left({\alpha-\beta\over 2n+\alpha+\beta}-t\right)p_{n}^{(\alpha,\beta)}(t).
\end{eqnarray*}

\end{itemize}

\end{lemma}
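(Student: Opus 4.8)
The plan is to treat part (i) separately from parts (ii)--(iv), all of them resting on the same principle: each Jacobi function is the Jacobi polynomial times the weight, normalized by the orthogonality constant. Writing $w(t)=(1-t)^{\alpha}(1+t)^{\beta}$ one has $p_n^{(\alpha,\beta)}=\frac{1}{k_n^{(\alpha,\beta)}}\,w(t)\,P_n^{(\alpha,\beta)}(t)$, with $k_n^{(\alpha,\beta)}$ the constant tabulated in the Introduction. Hence every algebraic identity among Jacobi polynomials with a fixed pair $(\alpha,\beta)$ can be multiplied by $w$ and converted into an identity among the functions $p_k^{(\alpha,\beta)}$, the only work being to replace each $P_k^{(\alpha,\beta)}$ by $k_k^{(\alpha,\beta)}\,w^{-1}p_k^{(\alpha,\beta)}$ and to simplify the resulting ratios of the $k$'s. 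When a shift $(\alpha,\beta)\to(\alpha+1,\beta+1)$ occurs I would use the observation $(1-t)^{\alpha+1}(1+t)^{\beta+1}=(1-t^2)\,w(t)$, so that $p_{n-1}^{(\alpha+1,\beta+1)}=\frac{1}{k_{n-1}^{(\alpha+1,\beta+1)}}(1-t^2)\,w(t)\,P_{n-1}^{(\alpha+1,\beta+1)}(t)$ and the extra factor $1-t^2$ is absorbed by a differentiation formula.

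For part (i) I would differentiate the Rodrigues expression for $p_n^{(\alpha,\beta)}$ given in the definition $k$ times, obtaining $\frac{d^{n+k}}{dt^{n+k}}\big((1-t)^{n+\alpha}(1+t)^{n+\beta}\big)$ up to the constant prefactor. The key point is that exactly the same $(n+k)$-th derivative is what the Rodrigues formula produces for $p_{n+k}^{(\alpha-k,\beta-k)}$: under the substitution $n\mapsto n+k$, $\alpha\mapsto\alpha-k$, $\beta\mapsto\beta-k$ the exponents $n+\alpha$ and $n+\beta$ and the order $n+k$ of differentiation are all invariant. Comparing the two prefactors and simplifying the quotient of Gamma functions and powers of $2$ yields precisely the factor $\frac{(-1)^k}{2^{k}}\frac{\Gamma(n+\alpha+\beta+1)}{\Gamma(n+\alpha+\beta+1-k)}$.

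Parts (iii) and (iv) I would derive from two classical identities for Jacobi polynomials (see \cite{Szego}). Part (iii) is the translation of the three-term recurrence relating $P_{n+1}^{(\alpha,\beta)}$, $P_n^{(\alpha,\beta)}$ and $P_{n-1}^{(\alpha,\beta)}$: since all three share the weight $w$, multiplying the recurrence by $w$ and dividing by the appropriate $k$'s gives a relation among $p_{n+1}^{(\alpha,\beta)}, p_n^{(\alpha,\beta)}, p_{n-1}^{(\alpha,\beta)}$ whose coefficients simplify to the stated rational functions of $n,\alpha,\beta$. Part (iv) I would obtain from the differentiation formula $(1-t^2)\frac{d}{dt}P_n^{(\alpha,\beta)}(t)=\frac{n(\alpha-\beta-(2n+\alpha+\beta)t)}{2n+\alpha+\beta}P_n^{(\alpha,\beta)}(t)+\frac{2(n+\alpha)(n+\beta)}{2n+\alpha+\beta}P_{n-1}^{(\alpha,\beta)}(t)$ combined with the relation $\frac{d}{dt}P_n^{(\alpha,\beta)}=\frac{n+\alpha+\beta+1}{2}P_{n-1}^{(\alpha+1,\beta+1)}$ already quoted above: together they express $(1-t^2)P_{n-1}^{(\alpha+1,\beta+1)}$ as a combination of $P_n^{(\alpha,\beta)}$ and $P_{n-1}^{(\alpha,\beta)}$, and multiplying by $w$ and converting to $p$'s gives (iv). Here the decisive constant identity is $\frac{k_n^{(\alpha,\beta)}}{k_{n-1}^{(\alpha+1,\beta+1)}}=\frac{n+\alpha+\beta+1}{4n}$, which makes the coefficient of $p_n^{(\alpha,\beta)}$ collapse to $\frac{1}{2}\big(\frac{\alpha-\beta}{2n+\alpha+\beta}-t\big)$, together with the analogous quotient $k_{n-1}^{(\alpha,\beta)}/k_{n-1}^{(\alpha+1,\beta+1)}$ governing the coefficient of $p_{n-1}^{(\alpha,\beta)}$.

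Finally, part (ii) I would deduce from (iii) and (iv). Identity (iv) expresses $p_{n-1}^{(\alpha+1,\beta+1)}$ through $p_{n-1}^{(\alpha,\beta)}$ and the term $t\,p_n^{(\alpha,\beta)}$; solving (iii) for $t\,p_n^{(\alpha,\beta)}$ and substituting eliminates the explicit factor $t$, producing the three-term expansion with constant coefficients stated in (ii). I expect the only genuine obstacle to be the bookkeeping: keeping track of the many ratios of $\Gamma(n+\alpha+1)$, $\Gamma(n+\beta+1)$, $\Gamma(n+\alpha+\beta+1)$ and $n!$ under the shifts $n\mapsto n\pm 1$ and $(\alpha,\beta)\mapsto(\alpha+1,\beta+1)$, and checking that, after multiplying by $w$ and clearing these constants, the coefficients telescope exactly to the displayed rational functions. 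The rest is routine.
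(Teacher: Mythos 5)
Your proposal is correct; I checked the decisive constants and they work out exactly as you claim: $k_n^{(\alpha,\beta)}/k_{n-1}^{(\alpha+1,\beta+1)}=(n+\alpha+\beta+1)/(4n)$, and eliminating $t\,p_n^{(\alpha,\beta)}$ between (iii) and (iv) reproduces precisely the three coefficients displayed in (ii). Your route coincides with the paper's for (i) and (iii): the paper also proves (i) from the Rodrigues expression (it differentiates once, obtaining $\frac{d}{dt}p_n^{(\alpha,\beta)}=-\frac{n+\alpha+\beta}{2}p_{n+1}^{(\alpha-1,\beta-1)}$, and iterates, whereas you differentiate $k$ times in one stroke), and it also obtains (iii) by translating the three-term recurrence, Szeg\"{o} (4.5.1), into the functions $p_n^{(\alpha,\beta)}$. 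Where you genuinely diverge is in the pair (ii)/(iv), whose logical dependency you reverse: the paper imports (ii) directly from the contiguous relation Szeg\"{o} (4.5.5), which writes $(1-t^2)P_{n-1}^{(\alpha+1,\beta+1)}$ as a combination of $P_{n-1}^{(\alpha,\beta)}$, $P_n^{(\alpha,\beta)}$, $P_{n+1}^{(\alpha,\beta)}$, and then gets (iv) by combining (ii) with (iii); you instead import (iv) directly from the derivative formula Szeg\"{o} (4.5.7), $(2n+\alpha+\beta)(1-t^2)\frac{d}{dt}P_n^{(\alpha,\beta)}=n\left(\alpha-\beta-(2n+\alpha+\beta)t\right)P_n^{(\alpha,\beta)}+2(n+\alpha)(n+\beta)P_{n-1}^{(\alpha,\beta)}$, coupled with $\frac{d}{dt}P_n^{(\alpha,\beta)}=\frac{n+\alpha+\beta+1}{2}P_{n-1}^{(\alpha+1,\beta+1)}$ (already quoted in Section 4), and then recover (ii) by internal elimination. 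The two routes are symmetric in cost --- each uses one external identity from Szeg\"{o} plus one elimination --- so neither is shorter; yours has the mild advantage of resting on the derivative relation the paper has already stated and on the perhaps more familiar formula (4.5.7), while the paper's choice makes (ii) self-contained and turns (iv) into a one-line consequence.
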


\begin{proof} To show the first part, note that
\begin{eqnarray*}
{d\over dt}p_n^{(\alpha,\beta)}(t)&=&\frac{(-1)^n(2n+\alpha+\beta+1)\Gamma(n+\alpha+\beta+1)}{2^{n+\alpha+\beta+1}\Gamma(n+\alpha+1)\Gamma(n+\beta+1)}\frac{d^{n+1}}{dt^{n+1}}((1-t)^{n+\alpha}(1+t)^{n+\beta})(t)\cr
&=&-{n+\alpha+\beta\over 2}p_{n+1}^{(\alpha-1,\beta-1)}(t).
\end{eqnarray*}
We iterative this equality to get $\left(p_n^{(\alpha,\beta)}\right)^{(k)}$.

The part (ii) is straightforward consequence of a formula similar to Jacobi polynomials, \cite[(4.5.5)]{Szego}. The part (iii) is obtained from the recurrence formula for Jacobi polynomials, \cite[(4.5.1)]{Szego}. To finish, the part (iv) is obtained from part (ii) and (iii).
\end{proof}

\begin{proposition} \label{yes} For $n, i\in \N$, $\alpha,\beta>-1$, the Jacobi functions $ p_n^{(\alpha,\beta)}$ verify the inequality
$$
\int_{-1}^1 {\vert  t\vert^i}\vert p_n^{(\alpha,\beta)}(t)\vert dt\le C_{\alpha,\beta}{\sqrt{n\over i^{1+\gamma}}},
$$where $C_{\alpha,\beta}$ is a independent constant of $n$ and $i$ and $\gamma=\min({\alpha, \beta)}$.

\end{proposition}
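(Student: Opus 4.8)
The plan is to follow the same scheme as in Propositions \ref{si} and \ref{bound}. Write $\omega_{\alpha,\beta}(t)=(1-t)^{\alpha}(1+t)^{\beta}$ and recall from the definition of the Jacobi functions that $p_n^{(\alpha,\beta)}=\frac{1}{k_n}\,\omega_{\alpha,\beta}\,P_n^{(\alpha,\beta)}$, where $k_n$ is the normalising constant of the orthogonality relation \eqref{ortos}. Since $\omega_{\alpha,\beta}\ge 0$ on $(-1,1)$ and $k_n>0$, I would first split the weight symmetrically and apply the Cauchy--Schwarz inequality:
\begin{equation*}
\int_{-1}^1 |t|^i\,|p_n^{(\alpha,\beta)}(t)|\,dt=\frac{1}{k_n}\int_{-1}^1\Big(|t|^i\,\omega_{\alpha,\beta}^{1/2}\Big)\Big(\omega_{\alpha,\beta}^{1/2}\,|P_n^{(\alpha,\beta)}|\Big)\,dt\le\frac{1}{k_n}\,\sqrt{I}\,\sqrt{J},
\end{equation*}
where $I=\int_{-1}^1 t^{2i}\,\omega_{\alpha,\beta}(t)\,dt$ and $J=\int_{-1}^1\omega_{\alpha,\beta}(t)\,|P_n^{(\alpha,\beta)}(t)|^2\,dt$. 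By \eqref{ortos} one has $J=k_n$, so the whole quantity collapses to $\sqrt{I}/\sqrt{k_n}$, and the problem reduces to estimating the even moment $I$ and the constant $k_n$.

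To bound $I$ I would split the integral at $t=0$. On $[0,1]$ the factor $(1+t)^{\beta}$ is controlled by a constant $M_\beta=\max(1,2^\beta)$ independent of $i$, so that
\begin{equation*}
\int_0^1 t^{2i}\,\omega_{\alpha,\beta}(t)\,dt\le M_\beta\int_0^1 t^{2i}(1-t)^{\alpha}\,dt=M_\beta\,\frac{\Gamma(2i+1)\Gamma(\alpha+1)}{\Gamma(2i+\alpha+2)},
\end{equation*}
and the substitution $t\mapsto -t$ turns the integral over $[-1,0]$ into the same expression with $\alpha$ and $\beta$ interchanged. Using the asymptotic $\Gamma(2i+1)/\Gamma(2i+\alpha+2)\sim (2i)^{-(\alpha+1)}$, which follows from the limit $\lim_{n}\Gamma(n+a)/(\Gamma(n)n^{a})=1$ already invoked in Proposition \ref{si}, this gives
\begin{equation*}
I\le C_{\alpha,\beta}\big(i^{-(1+\alpha)}+i^{-(1+\beta)}\big)\le 2\,C_{\alpha,\beta}\,i^{-(1+\gamma)},\qquad \gamma=\min(\alpha,\beta),
\end{equation*}
for every $i\ge 1$, since $i^{-x}\le i^{-y}$ whenever $x\ge y$, so the larger of the two powers dominates.

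Finally, the same Gamma-ratio asymptotics applied to $k_n$ show that
\begin{equation*}
\frac{\Gamma(n+\alpha+1)\Gamma(n+\beta+1)}{\Gamma(n+\alpha+\beta+1)\,n!}\longrightarrow 1,
\end{equation*}
so that $k_n$ behaves like $2^{\alpha+\beta}/n$ up to constants; in particular $k_n\ge c_{\alpha,\beta}/n$ for all $n$, whence $1/\sqrt{k_n}\le C_{\alpha,\beta}\sqrt{n}$. Combining this with the estimate for $I$ yields $\sqrt{I}/\sqrt{k_n}\le C_{\alpha,\beta}\sqrt{n/i^{1+\gamma}}$, which is the claim. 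The main obstacle I anticipate is not the Cauchy--Schwarz step but the bookkeeping required to keep the two estimates uniform: one must check that the constants coming from the Beta-function asymptotic and from the lower bound $k_n\gtrsim 1/n$ can be chosen independently of both $i$ and $n$, and that the endpoint contributions combine to the exponent $1+\gamma$ rather than $1+\alpha$ or $1+\beta$ separately. This last point is precisely where the symmetry $t\mapsto -t$ and the choice $\gamma=\min(\alpha,\beta)$ are essential.
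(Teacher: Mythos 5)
Your proposal is correct and follows essentially the same route as the paper: Cauchy--Schwarz with the weight split symmetrically, identifying the second factor via the orthogonality normalisation $\int_{-1}^1\omega_{\alpha,\beta}|P_n^{(\alpha,\beta)}|^2\,dt=k_n$, splitting the moment integral at $t=0$ and bounding the harmless weight factor by a constant on each half, and then Gamma-ratio asymptotics to obtain $i^{-(1+\gamma)}$ for the moment and $1/k_n\le C_{\alpha,\beta}\,n$ for the normalising constant. The only cosmetic difference is that you evaluate the half-integrals exactly as Beta functions and invoke the symmetry $t\mapsto -t$, whereas the paper bounds the two halves directly, so no substantive comparison is needed.
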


\begin{proof} We denote by $c_{n}^{(\alpha, \beta)}= \displaystyle{ \frac{(2n+\alpha+\beta+1)\Gamma(n+\alpha+\beta+1)n!}{2^{\alpha+\beta+1}\Gamma(n+\alpha+1)\Gamma(n+\beta+1)}}$. By the Cauchy-Schwarz inequality, we get that
\begin{eqnarray*}
&\,&\int_{-1}^1 {\vert  t\vert^i}\vert p_n^{(\alpha,\beta)}(t)\vert dt\le{{c_{n}^{(\alpha, \beta)}}}\left(\int_{-1}^1 t^{2i}(1-t)^\alpha(1+t)^\beta dt\right)^{1\over 2}\left(\int_{-1}^1 (1-t)^\alpha(1+t)^\beta\vert P_{n}^{(\alpha,\beta)}(t)\vert^2dt\right)^{1\over 2}\cr
&\quad&\qquad \qquad\le{\sqrt{c_{n}^{(\alpha, \beta)}}}\left(C_\alpha\int_{-1}^0t^{2i}(1+t)^\beta dt+C_\beta\int_{0}^1t^{2i}(1-t)^\alpha dt\right)^{1\over 2}\cr
&\quad&\qquad \qquad\le{C_{\alpha, \beta}\sqrt{c_{n}^{(\alpha, \beta)}}}\left({(2i)!\over\Gamma(2i+\beta+2)}+{(2i)!\over\Gamma(2i+\alpha+2)})\right)^{1\over 2}
\end{eqnarray*}
where we have used that functions $t\mapsto \sqrt{c_{n}^{(\alpha, \beta)}} (1-t)^{\alpha\over 2}(1+t)^{\beta\over 2}P_n^{(\alpha,\beta)}(t)$ form a Hilbertian basis on $L^2(-1,1)$.

Since $\displaystyle{\lim_{n\to \infty}\displaystyle{\Gamma(n+\alpha)\over (n-1)!\,n^\alpha}=1}$, we deduce that $\displaystyle{(2i)!\over\Gamma(2i+\beta+2)}\le C_\beta{1\over i^{\beta +1}}$, then

$$
c_{n}^{(\alpha, \beta)}
\le C_{\alpha, \beta}{n(n-1)!\,n^{\alpha+\beta+1}n!\over (n-1)!^2\,n^{\alpha+1}n^{\beta+1}}
\le  C_{\alpha,\beta} \,n$$
and we conclude the result.
\end{proof}

\begin{remark} {\rm In \cite[(7.34.1)]{Szego}, the equivalence $\int_{-1}^1\vert p_n^{(\alpha, \beta)}(t)\vert dt \sim \sqrt{n}$ when $n\to \infty$ is stated.}
\end{remark}
\medskip

The proof of next lemma is similar to the proof of Lemma \ref{laguerre} and we avoid it here.

\begin{lemma}
\label{jacobi} For $n\in \N$ and $0\le i\le n-1$, the Jacobi functions $p_n^{(\alpha, \beta)}$ verify
$$
\int  {t^i\over i!} p_n^{(\alpha, \beta)}(t) dt= -\sum_{k=0}^i{2^{k+1}{\Gamma(n+\alpha+\beta+1)}\over\Gamma(n+k+\alpha+\beta+2)} {t^{i-k}\over (i-k)!} p_{n-1-k}^{(\alpha+1+k, \beta+1+k)}(t).
$$
\end{lemma}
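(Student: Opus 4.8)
The plan is to follow the proof of Lemma~\ref{laguerre} almost verbatim, with the Laguerre weight replaced by the Jacobi weight and the integration by parts carried out $i$ times. First I would use the Rodrigues-type expression appearing in the very definition of $p_n^{(\alpha,\beta)}$, writing
\[
p_n^{(\alpha,\beta)}(t)=\kappa_n^{(\alpha,\beta)}\,\frac{d^n}{dt^n}\bigl((1-t)^{n+\alpha}(1+t)^{n+\beta}\bigr),\qquad
\kappa_n^{(\alpha,\beta)}:=\frac{(-1)^n(2n+\alpha+\beta+1)\Gamma(n+\alpha+\beta+1)}{2^{n+\alpha+\beta+1}\Gamma(n+\alpha+1)\Gamma(n+\beta+1)},
\]
and abbreviating $F(t):=(1-t)^{n+\alpha}(1+t)^{n+\beta}$, so that the integral equals $\kappa_n^{(\alpha,\beta)}\int (t^i/i!)\,F^{(n)}(t)\,dt$.

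Next I would integrate by parts $i$ times. Because the polynomial factor $t^i/i!$ has degree $i\le n-1$, its $(i+1)$-st derivative vanishes and the iteration terminates with no residual integral; since we are computing an antiderivative, no endpoint contributions arise. Using $\frac{d^k}{dt^k}(t^i/i!)=t^{i-k}/(i-k)!$ and tracking the alternating signs, this produces
\[
\int \frac{t^i}{i!}\,p_n^{(\alpha,\beta)}(t)\,dt
=\kappa_n^{(\alpha,\beta)}\sum_{k=0}^i\frac{(-1)^k}{(i-k)!}\,t^{i-k}\,F^{(n-1-k)}(t).
\]
The third step is to recognize each surviving derivative as a lower-order Jacobi function. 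Applying the Rodrigues formula above with $m=n-1-k$, $\gamma=\alpha+1+k$ and $\delta=\beta+1+k$, one checks that the exponents are preserved, namely $m+\gamma=n+\alpha$ and $m+\delta=n+\beta$, whence
\[
F^{(n-1-k)}(t)=\frac{1}{\kappa_{n-1-k}^{(\alpha+1+k,\beta+1+k)}}\,p_{n-1-k}^{(\alpha+1+k,\beta+1+k)}(t).
\]

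Finally I would simplify the ratio $\kappa_n^{(\alpha,\beta)}/\kappa_{n-1-k}^{(\alpha+1+k,\beta+1+k)}$, which I expect to be the one genuinely fiddly point. Substituting the shifted parameters, the prefactor $2m+\gamma+\delta+1$ evaluates again to $2n+\alpha+\beta+1$ and cancels, as do $\Gamma(n+\alpha+1)$ and $\Gamma(n+\beta+1)$; the powers of two combine to $2^{k+1}$, the remaining Gamma factors leave $\Gamma(n+\alpha+\beta+1)/\Gamma(n+k+\alpha+\beta+2)$, and the signs contribute $(-1)^{1+k}$. Multiplying by the $(-1)^k$ already present from the integration by parts turns the global sign into $-1$, and collecting all factors reproduces the asserted identity. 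The only real care needed is the bookkeeping of the four shifted arguments of $\kappa_{n-1-k}^{(\alpha+1+k,\beta+1+k)}$ and the verification that the factor $2n+\alpha+\beta+1$ is invariant under the simultaneous shift $m\mapsto n-1-k$, $(\gamma,\delta)\mapsto(\alpha+1+k,\beta+1+k)$.
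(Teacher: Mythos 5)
Your proposal is correct and is precisely the argument the paper intends: the paper omits the proof of Lemma~\ref{jacobi} with the remark that it is ``similar to the proof of Lemma~\ref{laguerre},'' and your proof carries out exactly that parallel argument (Rodrigues form, $i$-fold integration by parts with the residual integral vanishing since $\deg(t^i/i!)\le n-1$, re-identification of $F^{(n-1-k)}$ as a shifted Jacobi function, and simplification of the constant ratio). All the bookkeeping checks out, including the invariance of $2n+\alpha+\beta+1$ under the parameter shift and the sign computation $(-1)^k(-1)^{k+1}=-1$.
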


\begin{theorem}\label{jacobino} Take $n\in \N$, $0\le i\le n-1$; $\alpha, \beta>-1$ and  $\mathcal{Z}(P_{n}^{(\alpha, \beta)})=\{t_1<\ldots<t_{n_0} <t_{n_0+1}< t_{n}\}$  with   $0\in [t_{n_0},t_{n_0+1})$.

\begin{itemize}
\item[(i)] In the case that $i$ is even, the Jacobi functions $p_n^{(\alpha, \beta)}$ satisfy
 \begin{eqnarray*} \int_{-1}^1{\vert t\vert^i\over i!} \vert p_n^{(\alpha, \beta)}(t)\vert dt&=&\displaystyle\sum_{m=1}^{n}(-1)^{m+n}\displaystyle\displaystyle\sum_{k=0}^{i}{2^{k+2}{\Gamma(n+\alpha+\beta+1)}\over\Gamma(n+k+\alpha+\beta+2)} {t^{i-k}_m\over (i-k)!} p_{n-1-k}^{(\alpha+1+k, \beta+1+k)}(t_m).
\end{eqnarray*}
\item[(ii)] In the case that $i$ is odd, they verify
 \begin{eqnarray*} \int_{-1}^1{\vert t\vert^i\over i!}  \vert p_n^{(\alpha, \beta)}(t)\vert dt&=&\displaystyle\sum_{m=1}^{n_0}(-1)^{m+n+1}\displaystyle\displaystyle\sum_{k=0}^{i}{2^{k+2}{\Gamma(n+\alpha+\beta+1)}\over\Gamma(n+k+\alpha+\beta+2)} {t^{i-k}_m\over (i-k)!} p_{n-1-k}^{(\alpha+1+k, \beta+1+k)}(t_m)\cr
 &+&(-1)^{n_0+n}{2^{i+2}{\Gamma(n+\alpha+\beta+1)}\over\Gamma(n+i+\alpha+\beta+2)}p_{n-1-i}^{(\alpha+1+i, \beta+1+i)}(0)\cr&+ &\sum_{m={n_0+1}}^n(-1)^{m+n}\displaystyle\displaystyle\sum_{k=0}^{i}{2^{k+2}{\Gamma(n+\alpha+\beta+1)}\over\Gamma(n+k+\alpha+\beta+2)} {t^{i-k}_m\over (i-k)!} p_{n-1-k}^{(\alpha+1+k, \beta+1+k)}(t_m).
\end{eqnarray*}
 \end{itemize}

\end{theorem}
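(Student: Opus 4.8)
The plan is to imitate the proofs of Theorem \ref{aa} and Theorem \ref{zeroshermite}: split the integral at the zeros of $P_n^{(\alpha,\beta)}$ and insert the antiderivative supplied by Lemma \ref{jacobi}. First I would record the sign of $p_n^{(\alpha,\beta)}$ on each subinterval. Since $p_n^{(\alpha,\beta)}(t)=c_n^{(\alpha,\beta)}(1-t)^\alpha(1+t)^\beta P_n^{(\alpha,\beta)}(t)$ with $c_n^{(\alpha,\beta)}>0$ and $(1-t)^\alpha(1+t)^\beta>0$ on $(-1,1)$, the sign of $p_n^{(\alpha,\beta)}$ equals that of $P_n^{(\alpha,\beta)}$. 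From $P_n^{(\alpha,\beta)}(1)=\binom{n+\alpha}{n}>0$ and the sign change across each simple zero, one gets $|p_n^{(\alpha,\beta)}(t)|=(-1)^{m+n}p_n^{(\alpha,\beta)}(t)$ for $t_m<t<t_{m+1}$, where $t_0=-1$ and $t_{n+1}=1$. I also note for later use that, writing $F$ for the antiderivative on the right of Lemma \ref{jacobi} (the sum involving $p_{n-1-k}^{(\alpha+1+k,\beta+1+k)}$), each exponent $\alpha+1+k$ and $\beta+1+k$ is strictly positive because $\alpha,\beta>-1$ and $0\le k\le i\le n-1$; hence every term of $F$ carries a factor $(1-t)^{\alpha+1+k}$ or $(1+t)^{\beta+1+k}$, which forces $F(1)=F(-1)=0$.

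For part (i), $i$ is even, so $|t|^i=t^i$ and no splitting at the origin is needed. I would write the integral as $\sum_{m=0}^n(-1)^{m+n}\int_{t_m}^{t_{m+1}}(t^i/i!)\,p_n^{(\alpha,\beta)}(t)\,dt$, insert $F$, and evaluate the alternating sum $\sum_{m=0}^n(-1)^{m+n}[F(t_{m+1})-F(t_m)]$. Because the overall signs $(-1)^{m+n}$ alternate, the two contributions to each interior node reinforce, so $F(t_m)$ appears with coefficient $-2(-1)^{m+n}$, while the two endpoint contributions $F(\pm1)$ vanish by the remark above. This leaves $-2\sum_{m=1}^n(-1)^{m+n}F(t_m)$, and substituting the explicit form of $F$ (the factor $2$ turns $2^{k+1}$ into $2^{k+2}$) yields exactly the stated formula.

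For part (ii), $i$ is odd, so $|t|^i=-t^i$ on $(-1,0)$ and $|t|^i=t^i$ on $(0,1)$. Since $0\in[t_{n_0},t_{n_0+1})$, I would split into the full subintervals inside $(-1,0)$, the two halves $(t_{n_0},0)$ and $(0,t_{n_0+1})$ of the interval straddling the origin, and the full subintervals inside $(0,1)$. The extra factor $-1$ from $|t|^i=-t^i$ on the negative side converts the prefactor $(-1)^{m+n}$ into $(-1)^{m+n+1}$ there, which accounts for the two different signs in the statement. Telescoping the negative block (extended to include $(t_{n_0},0)$) and the positive block (extended to include $(0,t_{n_0+1})$) separately, the endpoints $\pm1$ again drop out by $F(\pm1)=0$, and the only uncancelled boundary value is $F(0)$, appearing once from each block; the case $0=t_{n_0}$ is automatically consistent since then $(t_{n_0},0)$ is empty and $p_n^{(\alpha,\beta)}(0)=0$.

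The main point, and the only genuinely delicate step, is the evaluation of this residual $F(0)$. Reading off $F$ from Lemma \ref{jacobi}, each summand carries a factor $0^{i-k}/(i-k)!$, which vanishes unless $k=i$; hence only the $k=i$ term of $F(0)$ survives, collapsing to a single multiple of $p_{n-1-i}^{(\alpha+1+i,\beta+1+i)}(0)$. Combining its two copies and simplifying the sign to $(-1)^{n_0+n}$ produces precisely the isolated middle term of part (ii). The remaining bookkeeping—tracking the index $n_0$ and matching the surviving $(-1)^{m+n\pm1}$ signs—is routine once the sign pattern and the vanishing of $F$ at $\pm1$ are in hand.
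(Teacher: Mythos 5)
Your proposal is correct and follows essentially the same route as the paper's proof: split the integral at the zeros of $P_n^{(\alpha,\beta)}$ (and additionally at the origin when $i$ is odd), insert the antiderivative from Lemma \ref{jacobi}, telescope the alternating sum using the vanishing of that antiderivative at $\pm 1$, and observe that at $t=0$ only the $k=i$ summand survives, which yields the isolated middle term with sign $(-1)^{n_0+n}$. The only cosmetic difference is that you keep the signs $(-1)^{m+n}$ uniform and work with the two blocks determined by $n_0$ directly, whereas the paper splits into cases according to the parity of $n$.
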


\begin{proof} (i) The proof is similar to the proof of Theorem \ref{zeroshermite} (i) due to $P_n^{(\alpha, \beta)}(1) >0$; in this case, we  apply Lemma \ref{jacobi}.

(ii) Suppose that $n$ is even. Again we denote by $t_0=-1$ and $t_{n+1}=1$. Then we show that
\begin{eqnarray*}&\quad&\int_{-1}^1{\vert t\vert^i\over i!} \vert p_n^{(\alpha,\beta)}(t)\vert dt= \frac{1}{i!}\sum_{m=0}^{{n_0-1}} (-1)^{m+1} \int_{t_m}^{t_{m+1}}t^ip_n^{(\alpha, \beta)}(t)dt+{(-1)^{n_0+1}\over i} \int_{t_{n_0}}^{0}t^ip_n^{(\alpha, \beta)}(t)dt\cr &\quad&\qquad+ {(-1)^{n_0}\over i!} \int^{t_{n_0+1}}_{0}t^ip_n^{(\alpha, \beta)}(t)(t)dt+ \frac{1}{i!}\sum^{n}_{m=n_0+1} (-1)^{m} \int_{t_m}^{t_{m+1}}t^ip_n^{(\alpha, \beta)}(t)(t)dt.\cr
\end{eqnarray*}

By Lemma \ref{jacobi}, we deduce that
\begin{eqnarray*}
\int^{t_{n_0+1}}_{0}{t^i\over i!}p_n^{(\alpha, \beta)}(t)dt&=&-\displaystyle\sum_{k=0}^{i}{2^{k+1}{\Gamma(n+\alpha+\beta+1)}\over\Gamma(n+k+\alpha+\beta+2)} {t_{n_0+1}^{i-k}\over (i-k)!}p_{n-1-k}^{(\alpha+1+k, \beta+1+k)}(t_{n_0+1})\cr
&+&{2^{i+1}{\Gamma(n+\alpha+\beta+1)}\over\Gamma(n+i+\alpha+\beta+2)} p_{n-1-i}^{(\alpha+1+i, \beta+1+i)}(0)
\end{eqnarray*}
and then
\begin{eqnarray*}
&\quad&{(-1)^{n_0}\over i!} \int^{t_{n_0+1}}_{0}t^ip_n^{(\alpha, \beta)}(t)(t)dt+ \frac{1}{i!}\sum^{n}_{m=n_0+1} (-1)^{m} \int_{t_m}^{t_{m+1}}t^ip_n^{(\alpha, \beta)}(t)(t)dt\cr
&\quad&={(-1)^{n_0}}{2^{i+1}{\Gamma(n+\alpha+\beta+1)}\over\Gamma(n+i+\alpha+\beta+2)} p_{n-1-i}^{(\alpha+1+i, \beta+1+i)}(0)\cr
&\quad&\qquad +  \sum_{m={n_0+1}}^n(-1)^{m}\displaystyle\displaystyle\sum_{k=0}^{i}{2^{k+2}{\Gamma(n+\alpha+\beta+1)}\over\Gamma(n+k+\alpha+\beta+2)} {t_{m}^{i-k}\over (i-k)!}p_{n-1-k}^{(\alpha+1+k, \beta+1+k)}(t_{m}).
\end{eqnarray*}
Analogously we get the identities for the first two summands and we conclude the result.

Finally the case when $n$ is odd is similar to the previous one.
\end{proof}

\begin{remark}{\rm In the case that $\alpha= \beta$, then $p_{2n}^{(\alpha,\alpha)}$ is a even function and $p_{2n-1}^{(\alpha,\alpha)}$ is odd. In this last case, $0\in \mathcal{Z}(P_{2n-1}^{(\alpha,\alpha)})$ and $n_0={n-1}$.}
\end{remark}

\begin{corollary}\label{calva} For $\alpha,\beta>-1$ and $n\in\N$, the Jacobi functions $p_n^{(\alpha, \beta)}$ verify $$ \lVert p_n^{(\alpha,\beta)}\rVert_1=\frac{4}{(n+\alpha+\beta+1)}\frac{(n+\alpha+\beta)}{(2n+\alpha+\beta)}
\frac{(2n+\alpha+\beta+1)}{(2n+\alpha+\beta-1)}\displaystyle\sum_{m=1}^n(-1)^{m+n}p_{n-1}^{(\alpha,\beta)}(t_m),
$$ with $t_j\in\mathcal{Z}(P_n^{(\alpha,\beta)})=\{t_1<\ldots<t_n\}.$
\end{corollary}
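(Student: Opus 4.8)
The plan is to obtain Corollary \ref{calva} as the special case $i=0$ of Theorem \ref{jacobino}(i), followed by a single algebraic substitution coming from Lemma \ref{lemin}(iv). Since $\lVert p_n^{(\alpha,\beta)}\rVert_1 = \int_{-1}^1 |p_n^{(\alpha,\beta)}(t)|\,dt$ is precisely the integral in \eqref{general} with $i=0$, and $i=0$ is even, Theorem \ref{jacobino}(i) applies directly. Setting $i=0$ collapses the inner sum to its $k=0$ term, for which $t_m^{i-k}/(i-k)! = 1$, so I would first record
\begin{equation*}
\lVert p_n^{(\alpha,\beta)}\rVert_1 = \sum_{m=1}^n (-1)^{m+n}\,\frac{2^2\,\Gamma(n+\alpha+\beta+1)}{\Gamma(n+\alpha+\beta+2)}\,p_{n-1}^{(\alpha+1,\beta+1)}(t_m).
\end{equation*}
Using $\Gamma(n+\alpha+\beta+2) = (n+\alpha+\beta+1)\Gamma(n+\alpha+\beta+1)$, the constant prefactor simplifies to $4/(n+\alpha+\beta+1)$.

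The next step is to rewrite the evaluation $p_{n-1}^{(\alpha+1,\beta+1)}(t_m)$ in terms of $p_{n-1}^{(\alpha,\beta)}(t_m)$, which is where Lemma \ref{lemin}(iv) enters. The decisive observation is that each abscissa $t_m$ belongs to $\mathcal{Z}(P_n^{(\alpha,\beta)})$, which lies in the open interval $(-1,1)$; because the weight $(1-t)^\alpha(1+t)^\beta$ is strictly positive there, one has $p_n^{(\alpha,\beta)}(t_m)=0$, as noted in the Introduction where $\mathcal{Z}(Q_{n,\omega})=\mathcal{Z}(\omega Q_{n,\omega})$ on $(a,b)$. Consequently the term carrying the factor $\tfrac12\bigl(\tfrac{\alpha-\beta}{2n+\alpha+\beta}-t\bigr)p_n^{(\alpha,\beta)}(t)$ in Lemma \ref{lemin}(iv) vanishes at $t=t_m$, leaving
\begin{equation*}
p_{n-1}^{(\alpha+1,\beta+1)}(t_m) = \frac{(n+\alpha+\beta)(2n+\alpha+\beta+1)}{(2n+\alpha+\beta)(2n+\alpha+\beta-1)}\,p_{n-1}^{(\alpha,\beta)}(t_m).
\end{equation*}

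Substituting this into the displayed formula for $\lVert p_n^{(\alpha,\beta)}\rVert_1$ and collecting the rational prefactors reproduces exactly the claimed identity. The argument is essentially bookkeeping of gamma-function and rational constants, so I do not expect a genuine analytic obstacle; the only step requiring an idea rather than computation is the vanishing of $p_n^{(\alpha,\beta)}$ at its interior zeros, which is what lets the $(\alpha+1,\beta+1)$ parameter value collapse to a scalar multiple of the $(\alpha,\beta)$ value. The main care needed is in matching the constants correctly, in particular confirming that the $2^{k+2}$ coefficient of Theorem \ref{jacobino}(i) at $k=0$ together with the gamma-quotient yields the factor $4/(n+\alpha+\beta+1)$ appearing in the statement.
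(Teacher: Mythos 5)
Your proposal is correct and follows exactly the paper's own route: specialize Theorem \ref{jacobino}(i) to $i=0$, simplify the gamma quotient to $4/(n+\alpha+\beta+1)$, and then apply Lemma \ref{lemin}(iv) at the abscissas $t_m$. In fact you make explicit the one detail the paper leaves tacit, namely that $p_n^{(\alpha,\beta)}(t_m)=0$ because $t_m\in\mathcal{Z}(P_n^{(\alpha,\beta)})\subset(-1,1)$, which is precisely what collapses Lemma \ref{lemin}(iv) to a scalar multiple of $p_{n-1}^{(\alpha,\beta)}(t_m)$.
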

\begin{proof} By Theorem \ref{jacobino}, we get that
\begin{eqnarray*} \int_{-1}^1 \vert p_n^{(\alpha, \beta)}(t)\vert dt&=&\displaystyle\sum_{m=1}^{n}(-1)^{m+n}\displaystyle
{2^{2}{\Gamma(n+\alpha+\beta+1)}\over\Gamma(n+\alpha+\beta+2)}  p_{n-1}^{(\alpha+1, \beta+1)}(t_m)\cr
&=&\frac{4}{(n+\alpha+\beta+1)}\frac{(n+\alpha+\beta)}{(2n+\alpha+\beta)}
\frac{(2n+\alpha+\beta+1)}{(2n+\alpha+\beta-1)}\displaystyle\sum_{m=1}^n(-1)^{m+n}p_{n-1}^{(\alpha,\beta)}(t_m)
\end{eqnarray*}
where we have applied Lemma \ref{lemin} (iv).
\end{proof}

\begin{example} \label{jaejemplo}{\rm For $p_2^{(0,0)}(t)={5\over 4}(3t^2-1)$, we conclude that
$$
\int_{-1}^1{5\over 4} \vert 3t^2-1\vert dt= \frac{10\sqrt{3}}{9}; \qquad
\int_{-1}^1{5\over 4} \vert t(3t^2-1)\vert dt=\frac{25}{24}.
$$
In the case that $p_3^{(0,0)}(t)={7\over 4}(5t^3-3t)$, we get that
$$
\int_{-1}^1{7\over 4} \vert (5t^3-3t)\vert dt=\frac{91}{40}; \qquad
\int_{-1}^1{7\over 4} \vert t(5t^3-3t)\vert dt=\frac{42}{25}\sqrt{\frac{3}{5}}.
$$
Now we consider $p_2^{(1,0)}(t)={3\over 4}(-5t^3+3t^2+3t-1)$ and we obtain that
\begin{eqnarray*}
\int_{-1}^1{3\over 4}\vert-5t^3+3t^2+3t-1\vert dt&=&\frac{72}{125}\sqrt{6}; \cr
\int_{-1}^1{3\over 4}\vert t(-5t^3+3t^2+3t-1)\vert dt&=&\frac{18921}{25000}.
\end{eqnarray*}

}
\end{example}

\subsection*{Acknowledgements}Authors thank M. Alfaro, O. Ciaurri, F. Marcell\'{a}n, M. Rezola, L. Roncal and J.L. Varona some advices, comments and references to improve the final version of the paper.

\end{document}